\documentclass[final,1p,times,margin=1in]{elsarticle}

\usepackage{amssymb}
 \usepackage{amsthm}
\usepackage{amscd}
\usepackage{amsmath}
\usepackage{amsfonts}
\usepackage{color}
\usepackage{xcolor}
\usepackage{colortbl}
\usepackage{graphicx}
\usepackage{tikz}
\usepackage{url}
\newtheorem{theorem}{Theorem}[section]

\newtheorem{example}[subsection]{Example}
\usepackage[ruled]{algorithm2e}
\usepackage{mathrsfs}
\usepackage{titletoc}
\usepackage{float}
\usepackage{graphicx}
\usepackage{subcaption}

\newcommand{\first}[1]{%
    \cellcolor{red!40}\textbf{#1}%
}
\newcommand{\second}[1]{%
    \cellcolor{orange!40}\textbf{#1}%
}

\usepackage{caption}
\usepackage{subcaption}
\usepackage{booktabs}
\usepackage{geometry}

\journal{******}

\begin{document}

\begin{frontmatter}

\title{Evolving edge weights via local entropy flow and cohesion flow on graphs}

\author[ruc]{Juan Zhao}
\ead{zhaojuan0509@ruc.edu.cn}

\author[ruc]{Jicheng Ma}
\ead{2019202433@ruc.edu.cn}

\author[ruc]{Yunyan Yang\corref{cor1}}
\ead{yunyanyang@ruc.edu.cn}

\author[bnu]{Liang Zhao}
\ead{liangzhao@bnu.edu.cn}

\cortext[cor1]{Corresponding author}

\address[ruc]{School of Mathematics, Renmin University of China, Beijing, 100872, China}
\address[bnu]{School of Mathematical Sciences, Key Laboratory of Mathematics and Complex Systems of MOE,\\
Beijing Normal University, Beijing, 100875, China}

\begin{abstract}

In this paper, we first propose two different quantities on graphs, namely local entropy and cohesion, then design two corresponding flows for edge weights: the local entropy flow and the cohesion flow.
We establish the global existence and uniqueness of solutions for both flows and investigate their asymptotic behaviors, including the case that the limit goes to positive infinity. 
Moreover, they can be applied to fundamental network analysis tasks, including community detection and node classification.
Empirical evaluations demonstrate that our method achieves performance competitive with  Ollivier Ricci flow and Lin-Lu-Yau Ricci flow on benchmark network analysis tasks. In experimental scenarios, we first apply the cohesion flow to evolve the edge weights of the graph, and then apply the local entropy flow to further update the resulting weighted graph.
Both flows are computationally efficient, leading to a significant reduction in overall computational cost and improved scalability.
\end{abstract}

\begin{keyword}
entropy flow \sep Ricci flow\sep community detection\sep node classification\sep weighted graph
\MSC[2020]05C21\sep 35R02 \sep 68Q06
\end{keyword}

\end{frontmatter}

\titlecontents{section}[0mm]
                       {\vspace{.2\baselineskip}}
                       {\thecontentslabel~\hspace{.5em}}
                        {}
                        {\dotfill\contentspage[{\makebox[0pt][r]{\thecontentspage}}]}
\titlecontents{subsection}[3mm]
                       {\vspace{.2\baselineskip}}
                       {\thecontentslabel~\hspace{.5em}}
                        {}
                       {\dotfill\contentspage[{\makebox[0pt][r]{\thecontentspage}}]}

\setcounter{tocdepth}{2}



\numberwithin{equation}{section}
\section{Introduction}

Graph models serve as a fundamental tool across a wide range of fields, including network analysis, machine learning, and complex systems. In these models, edges between nodes are assigned weights to encode the strength or distance of pairwise connections. Understanding the intrinsic structure of graphs, such as node clustering into communities, or information propagation patterns across the network, is essential for tasks including community detection, core detection, and graph representation learning.

An important and widely adopted class of methods for characterizing the intrinsic topological and geometric properties of discrete graph structures is discrete curvature and curvature flow. Ollivier proposed a discrete Ricci curvature framework for metric measure spaces based on optimal transport distance in the seminal works \cite{Ollivier-1, Ollivier-2}. This framework was later modified by Lin, Lu and Yau \cite{Lin1}, and is widely referred to as Lin-Lu-Yau Ricci curvature. Bauer, Jost and Liu systematically investigated the spectral properties of the normalized graph Laplacian associated with Ollivier Ricci curvature \cite{Bauer-Jost-Liu}. Jost and Liu established curvature-dimension inequalities for discrete graph settings \cite{Jost-Liu}.

Inspired by Hamilton's pioneering Ricci flow on smooth manifolds \cite{Hamilton} and Perelman's manifold surgery theory \cite{perelman2002entropy}, discrete geometric flows have aspired to generalize continuous manifold geometry to discrete graph structures. Chow and Luo developed the theory of combinatorial Ricci flow on surfaces \cite{CL}. For complex network analysis, Weber, Saucan and Jost introduced Forman-Ricci flow and systematically explored its applications in graph structural mining \cite{Weber1,Weber2}. Focusing on community detection, Ni et al. applied Ollivier Ricci flow with graph surgery operations to identify community structures \cite{NLLG}. This work was further generalized to normalized Lin-Lu-Yau Ricci flow by Lai, Bai and Lin \cite{LaiX}. On the theoretical side, Bai et al. established the well-posedness theory for Lin-Lu-Yau Ricci flow, including the existence, uniqueness, and long-time convergence of its solutions \cite{Bai-Lin}. For recent theoretical advances on the well-posedness, convergence behavior, and prescribed curvature problem of Ollivier or Lin-Lu-Yau Ricci curvature, we refer interested readers to \cite{bai,Bai-Hua-Lin-Liu,Bai-Li-Liu-Lai,Bai-Liu-Lai,Li-Munch,Lin-Liu}. In addition, Ma and Yang proposed several variants of the normalized Ricci flow, established their long-time existence theory, and further applied these flow frameworks to community detection tasks \cite{Ma1,Ma2,Ma3}. For extended applications of discrete curvature flow in core detection and hypergraph community detection, we refer readers to \cite{Sengupta,Tian-Ma-Yang-Zhao,Zhao-Ma-Yang-Zhao2,Zhao-Ma-Yang-Zhao}.

Entropy is a foundational concept in both physics and information theory. Entropy-based approaches have long been proven effective for quantifying uncertainty, complexity, and information dynamics in networked systems. Rooted in information theory and statistical mechanics, entropy and relative entropy have been extensively studied in connection with stochastic processes, diffusion dynamics, and random walks on graphs \cite{Shannon,Kullback,Cover}. These methods are widely used in network analysis to measure key structural properties including heterogeneity, robustness, and node centrality \cite{Anand, Dehmer}.

In the context of graph frameworks, the key information metric we adopt is the Kullback-Leibler (KL) divergence, which quantifies the dissimilarity between two probability measures. In our recent work \cite{entropyflow}, we constructed a family of nowhere-zero random walks on graphs, and defined edge entropy via a symmetric variant of KL divergence (J-divergence \cite{Jeff}) between these walks. Motivated by the geometric intuition of Ricci flow, we further proposed an entropy flow on weighted graphs with rigorous theoretical guarantees. This method achieves detection accuracy comparable to discrete Ollivier Ricci flow in the task of community detection, while avoiding the high computational cost of optimal transport. Despite its improved efficiency over Ricci flow methods, this framework still faces scalability challenges for large-scale graph computations, due to the global nature of the nowhere-zero random walks it relies on.

In this paper, to further simplify the computation of global entropy, we introduce a local entropy and the corresponding local entropy flow. Along this flow, edges with higher entropy experience faster weight growth, while those with lower entropy grow more slowly. In addition to the point of view of probability, we introduce a cohesion, characterizing  the geometric structure through cohesiveness of the two endpoints of an edge. We further construct the cohesion flow to evolve edge weights accordingly. Theoretically, we prove the existence and uniqueness of global solutions to both flows, and further investigate their asymptotic behavior.
To assess the effectiveness of the proposed flows, we consider two representative network analysis tasks: community detection and node classification using graph convolutional networks (GCNs).
For community detection, we observe that both the local entropy flow and the cohesion flow are effective when applied individually, while their sequential combination, which first evolves edge weights via the cohesion flow and then further updates them using the local entropy flow, yields improved performance.
For node classification, both the individual flows and their combination achieve comparable results.
Experimental results show that the proposed  flows achieve performance compared with both Ricci flow approaches \cite{LaiX,Ma1,Ma2,Ma3,NLLG,GEGCN} and our previous global entropy flow approaches \cite{entropyflow}.

The remainder of this paper is organized as follows. 
In Section \ref{mainresults}, we introduce the flows and state the main results.
In Section \ref{examples}, we provide several examples of both flows. Section \ref{theorems} presents the proofs of the  main theorems. In Section \ref{experimental}, we validate the performance of the proposed flows for community detection and node classification. Finally, we conclude this work in Section \ref{conclusion}.

\section{Local entropy flow and cohesion flow}
\label{mainresults}

In this section, we present two quantities and their associated flows on graphs. We first introduce the local entropy defined via paired local random walks and construct the corresponding local entropy flow, together with its well-posedness and convergence properties. We then define the cohesion based on local neighborhood structures and construct the corresponding cohesion flow, along with its well-posedness and long-time behavior.

\subsection{Local entropy flow}
In this subsection, we define local random walks and the associated edge entropy.
Based on these quantities, we construct the local entropy flow and present its main theoretical properties together with a discrete version for numerical implementation.

\subsubsection{Local random walks}

Let $G=(V, E, \mathbf{w})$ be a connected finite weighted graph, where $V=\{x_1, x_2, \dots, x_n\}$ is the vertex set, $E=\{e_1,e_2,\dots, e_m\}$ is the edge set, and $\mathbf{w}=(w_e)_{e\in E}$ is the edge weight vector with $w_e>0$ for all $e\in E$. We write $x\sim y$ if vertices $x$ and $y$ are adjacent. For any edge $e=xy\in E$, we denote the $1$-step neighborhood of $x$ and $y$ by
$$\mathscr{N}_x = \{u\in V: u\sim x \text{ or } u=x\}, \quad \mathscr{N}_y = \{u\in V: u\sim y \text{ or } u=y\},$$
respectively.

For each $e=xy$ and a fixed parameter $\alpha\in(0,1)$, the {\it local random walk starting from} $x$ (associated with edge $e=xy$), denoted by $\mu_x^\alpha: V\to [0,1]$, is defined as follows:
\begin{itemize}
    \item If $\mathscr{N}_y\setminus\mathscr{N}_x \neq \emptyset$, we set
    \begin{equation}\label{mu-1}
    \mu_x^\alpha(z)=
    \begin{cases}
    \alpha,& z=x,\\[1.5ex]
    \alpha(1-\alpha)\frac{w_{xz}}{\sum\limits_{u\sim x}w_{xu}},& z\sim x,\\[1.2ex]
    (1-\alpha)^2\frac{w_{yz}}{\sum\limits_{u\in\mathscr{N}_y\setminus\mathscr{N}_x}w_{yu}},& z\in\mathscr{N}_y\setminus\mathscr{N}_x,\\[1.2ex]
    0,& \text{otherwise}.
    \end{cases}
    \end{equation}
    \item If $\mathscr{N}_y\setminus\mathscr{N}_x = \emptyset$, we define $\mu_x^\alpha$ as the $1$-step $\alpha$-lazy random walk on $G$, i.e.,
    \begin{equation}\label{mu-2}
    \mu_x^\alpha(z)=
    \begin{cases}
    \alpha,& z=x,\\[1.5ex]
    (1-\alpha)\frac{w_{xz}}{\sum\limits_{u\sim x}w_{xu}},& z\sim x,\\[1.2ex]
    0,& \text{otherwise}.
    \end{cases}
    \end{equation}
\end{itemize}

It is straightforward to verify that for any $\alpha\in(0,1)$, the support of $\mu_x^\alpha$ is exactly $\mathscr{N}_x\cup\mathscr{N}_y$, i.e., $\{u\in V: \mu_x^\alpha(u)\neq 0\} = \mathscr{N}_x\cup\mathscr{N}_y$. In particular, we have
\begin{equation}\label{positive}
0<\mu_x^\alpha(z)<1,\quad\forall z\in \mathscr{N}_x\cup\mathscr{N}_y
\end{equation}
and
\begin{equation}\label{identity}
\sum_{z\in V}\mu_x^\alpha(z) = \sum_{z\in \mathscr{N}_x\cup\mathscr{N}_y}\mu_x^\alpha(z)=1,
\end{equation}
which confirms that $\mu_x^\alpha$ is a well-defined probability measure on $V$.

\subsubsection{Entropy between local random walks}

For any $\alpha\in(0,1)$ and edge $e=xy\in E$, let $\mu_x^\alpha$ and $\mu_y^\alpha$ be paired local random walks associated with $e$ defined above. We define the edge entropy of $e$ as the symmetric KL divergence between $\mu_x^\alpha$ and $\mu_y^\alpha$:
\begin{equation}\label{theta}
\Theta_e^\alpha=\sum_{z\in \mathscr{N}_x\cup\mathscr{N}_y}\left(\mu_x^\alpha(z)\log\frac{\mu_x^\alpha(z)}{\mu_y^\alpha(z)}
+\mu_y^\alpha(z)\log\frac{\mu_y^\alpha(z)}{\mu_x^\alpha(z)}\right).
\end{equation}

Since the function $f(s)=-\log s$ is convex on $(0,+\infty)$, we apply Jensen's inequality to \eqref{positive} and \eqref{identity} to derive the non-negativity of the edge entropy:
\begin{align}\nonumber
\Theta_e^\alpha&=-\sum_{z\in \mathscr{N}_x\cup\mathscr{N}_y}\mu_x^\alpha(z)\log\frac{\mu_y^\alpha(z)}{\mu_x^\alpha(z)}
-\sum_{z\in \mathscr{N}_x\cup\mathscr{N}_y}\mu_y^\alpha(z)\log\frac{\mu_x^\alpha(z)}{\mu_y^\alpha(z)}\\ \nonumber
&\geq-\log\left(\sum_{z\in \mathscr{N}_x\cup\mathscr{N}_y}\mu_x^\alpha(z)\frac{\mu_y^\alpha(z)}{\mu_x^\alpha(z)}\right)
-\log\left(\sum_{z\in \mathscr{N}_x\cup\mathscr{N}_y}\mu_y^\alpha(z)\frac{\mu_x^\alpha(z)}{\mu_y^\alpha(z)}\right)\\
&=0.\label{greater}
\end{align}
Moreover, $\Theta_e^\alpha=0$ if and only if $\mu_x^\alpha(z)=\mu_y^\alpha(z)$ for all $z\in \mathscr{N}_x\cup\mathscr{N}_y$.

\subsubsection{Local entropy flow}

For any $\alpha\in(0,1)$ and edge $e=xy\in E$, let $w_{0,e}>0$ be the initial weight of $e$. We define the local random walk-based entropy flow as:
\begin{equation}\label{entropy-flow}
\begin{cases}
w_e^\prime(t)=\Theta_e^\alpha(t),& t>0,\\[1.5ex]
w_e(t)>0, & t>0,\\[1.5ex]
w_e(0)=w_{0,e},& \forall e\in E.
\end{cases}
\end{equation}
Here $\Theta_e^\alpha(t)$ is the edge entropy of $e$ computed with respect to the time-varying weight vector $\mathbf{w}(t)=(w_e(t))_{e\in E}$ at time $t$, according to \eqref{theta}. Since $\Theta_e^\alpha(t)\geq 0$ for all $t\geq0$ by \eqref{greater}, the derivative $w_e'(t)$ is non-negative along the flow. It follows that $w_e(t)\geq w_e(0)=w_{0,e}>0$ for all $t\geq0$ and every $e\in E$.

\subsubsection{Main results}

Let $\mathbb{R}_+^m = \{\mathbf{a}=(a_1,a_2,\dots,a_m)\in\mathbb{R}^m: a_j>0 \text{ for all } 1\leq j\leq m\}$. The following theorem establishes the existence, uniqueness, and long-time convergence of global solutions to the local entropy flow (\ref{entropy-flow}).

\begin{theorem}\label{existence}
Let $G=(V,E,\mathbf{w}_0)$ be a connected finite weighted graph with initial weight $\mathbf{w}_0\in\mathbb{R}_+^m$. Then for any $\alpha\in(0,1)$, the local entropy flow \eqref{entropy-flow} admits a unique global solution $\mathbf{w}(t)=(w_e(t))_{e\in E}\in\mathbb{R}_+^m$ for all $t\in[0,+\infty)$.

Moreover, exactly one of the following two alternatives holds for the solution:
\begin{enumerate}
    \item[(i)] For every $e\in E$, $w_e(t)$ is non-decreasing and $\lim\limits_{t\to+\infty} w_e(t) = +\infty$;
    \item[(ii)] For every $e\in E$, $w_e(t)$ is non-decreasing and converges to some positive limit $w_e^\ast>0$ as $t\to+\infty$. Moreover, $\lim\limits_{t\to+\infty} \Theta_e^\alpha(t) = 0$ for all $e\in E$, where $\Theta_{e}^\ast$ is the entropy on the edge $e$ with respect to the weights $\mathbf{w}^\ast=(w_e^\ast)_{e\in E}$.
\end{enumerate}
\end{theorem}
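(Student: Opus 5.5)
The plan is to treat \eqref{entropy-flow} as an autonomous ODE system $\mathbf{w}'=\Theta^\alpha(\mathbf{w})$ on the open set $\mathbb{R}_+^m$. There are three steps: local well-posedness, a growth bound that rules out finite-time blow-up, and a connectivity argument that yields the dichotomy.

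\emph{Local existence and uniqueness.} For each edge $e=xy$, the neighborhoods $\mathscr{N}_x,\mathscr{N}_y$ and the choice between \eqref{mu-1} and \eqref{mu-2} depend only on the graph, not on the weights. Hence on $\mathbb{R}_+^m$ every $\mu_x^\alpha(z)$ is a fixed rational function of $\mathbf{w}$ with positive denominators. By \eqref{positive} it is strictly positive on $\mathscr{N}_x\cup\mathscr{N}_y$. Therefore $\Theta_e^\alpha$ in \eqref{theta} is $C^\infty$ on $\mathbb{R}_+^m$, and Picard--Lindel\"of gives a unique local solution. Since $\Theta_e^\alpha\ge0$ by \eqref{greater}, each $w_e(t)$ is non-decreasing, so $w_e(t)\ge w_{0,e}$ and the solution never reaches $\partial\mathbb{R}_+^m$. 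It remains to exclude blow-up in finite time.

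\emph{A priori growth bound.} Let $m_0=\min_e w_{0,e}$ and $W(t)=\max_e w_e(t)$. Every nonzero value of $\mu_x^\alpha$ is at least
$$c\,\frac{\min_e w_e}{D\,\max_e w_e}\ \ge\ \frac{c\,m_0}{D\,W(t)},$$
where $c=\min\{\alpha,\alpha(1-\alpha),(1-\alpha)^2\}$ and $D$ is the maximal degree. Since the $\mu$'s are probability measures, each term in \eqref{theta} is controlled by $\max_z|\log\mu_x^\alpha(z)|+\max_z|\log\mu_y^\alpha(z)|$. This gives
$$0\le\Theta_e^\alpha\le C_1+C_2\log W(t).$$
Hence $W$, a maximum of finitely many $C^1$ functions, satisfies the differential inequality $W'\le C_1+C_2\log W$ in the Dini sense. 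Consequently $W$ grows at most like $Ct\log t$, stays finite on every bounded interval, and the maximal solution exists on $[0,+\infty)$.

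\emph{Dichotomy.} By monotonicity, each $w_e(t)$ either converges to a finite limit $w_e^*\ge w_{0,e}>0$ or tends to $+\infty$. The main step is to show that the behaviour is the same for all edges. Suppose not. By connectivity there is a vertex $x$ with an incident edge $e=xy$ that stays bounded and another incident edge $xz$ with $w_{xz}\to\infty$. Since $y\sim x$, in both \eqref{mu-1} and \eqref{mu-2} we have
$$\mu_x^\alpha(y)\le\frac{(1-\alpha)\,w_{xy}}{\sum_{u\sim x}w_{xu}}\to0,\qquad\text{while}\qquad \mu_y^\alpha(y)=\alpha.$$
Pinsker's inequality applied to each KL term gives $\Theta_e^\alpha\ge\|\mu_x^\alpha-\mu_y^\alpha\|_{1}^2\ge(\alpha-\mu_x^\alpha(y))^2$. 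This tends to $\alpha^2>0$, so $w_e'\ge\alpha^2/2$ for large $t$, contradicting boundedness of $w_e$. Therefore either all weights diverge, which is (i), or all converge, which is (ii).

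\emph{The limit of $\Theta_e^\alpha$ in case (ii).} Here $\mathbf{w}(t)\to\mathbf{w}^*\in\mathbb{R}_+^m$, and continuity of $\Theta^\alpha$ on $\mathbb{R}_+^m$ gives $\Theta_e^\alpha(t)\to\Theta_e^*$. Since $\int_0^\infty\Theta_e^\alpha\,dt=w_e^*-w_{0,e}<\infty$ and the integrand converges, the limit must be $\Theta_e^*=0$.

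I expect the main obstacle to be the growth bound in the second step: it has to be checked carefully that every nonzero $\mu$-value in both cases \eqref{mu-1} and \eqref{mu-2} is bounded below by a ratio of the minimal weight to the maximal weight. The dichotomy step is short once Pinsker is used, but it depends on $y$ always falling in the ``$z\sim x$'' branch of $\mu_x^\alpha$.
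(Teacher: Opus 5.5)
Your proof is correct and follows the same three-step structure as the paper's. The steps are: Picard--Lindel\"of from smoothness (local Lipschitz continuity) of $\Theta^\alpha$ on $\mathbb{R}_+^m$; a logarithmic bound $\Theta_e^\alpha\le C(1+\log(\cdot))$ to rule out finite-time blow-up; and a contradiction at a vertex $x$ where a bounded edge $xy$ meets an unbounded edge $xz$, using $\mu_y^\alpha(y)=\alpha$ against $\mu_x^\alpha(y)\to 0$. The only real variation is in that last step: you use Pinsker's inequality to get $\Theta_{xy}^\alpha\ge(\alpha-\mu_x^\alpha(y))^2\to\alpha^2$, whereas the paper bounds the single term $\mu_y^\alpha(y)\log\frac{\mu_y^\alpha(y)}{\mu_x^\alpha(y)}$ below by $\alpha\log\frac{w_{xz}}{w_{xy}}-C$ and every other term below by $-e^{-1}$, so its lower bound diverges rather than staying positive; either suffices.
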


If $G$ is disconnected, the above theorem holds independently on each connected component, since the local entropy flow does not interact across different components.

Next, we introduce a discrete version of the continuous local entropy flow \eqref{entropy-flow} for numerical implementation. Fix a time step size $s>0$, and let $t_j = js$ for $j=0,1,2,\cdots$. The discrete local entropy flow is defined as:
\begin{equation}\label{discrete}
\begin{cases}
w_e(t_{j+1})=w_e(t_j)+s\Theta_e^\alpha(t_j),\\[1.5ex]
w_e(t_j)>0,\quad j=0,1,2,\cdots,\\[1.5ex]
w_e(0)=w_{0,e},\quad \forall e\in E.
\end{cases}
\end{equation}
Concerning the existence, uniqueness, and convergence of a solution to  \eqref{discrete}, we have an analogous result to Theorem 2.1 in \cite{entropyflow}.

\begin{theorem}\label{discrete-existence}
Let $G=(V,E,\mathbf{w}_0)$ be a connected finite weighted graph as in Theorem \ref{existence}. Then for any $\alpha\in(0,1)$ and initial weight $\mathbf{w}_0\in\mathbb{R}_+^m$, the discrete local entropy flow \eqref{discrete} admits a unique solution $\mathbf{w}(t_j)=(w_e(t_j))_{e\in E}$ for all $j=0,1,2,\cdots$.

Moreover, exactly one of the following two alternatives holds:
\begin{enumerate}
    \item[(i)] For every $e\in E$, $w_e(t_j)$ is non-decreasing and $\lim\limits_{j\to+\infty} w_e(t_j) = +\infty$;
    \item[(ii)] For every $e\in E$, $w_e(t_j)$ is non-decreasing and converges to some positive limit $w_e^\ast>0$ as $j\to+\infty$. In this case, $\lim\limits_{j\to+\infty}\Theta_e^\alpha(t_j)=0$ for all $e\in E$.
\end{enumerate}
\end{theorem}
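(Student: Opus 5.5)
The plan is to mirror the argument for Theorem 2.1 in \cite{entropyflow}, using the explicit local structure of $\mu_x^\alpha$ and $\mu_y^\alpha$ to rule out mixed behaviour.

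\emph{Existence and uniqueness.} I will argue by induction on $j$. If $\mathbf{w}(t_j)\in\mathbb{R}_+^m$, then every $\mu_x^\alpha,\mu_y^\alpha$ in \eqref{mu-1}--\eqref{mu-2} is well defined, since all denominators are sums of positive weights. Hence $\Theta_e^\alpha(t_j)$ is a well-defined real number, and by \eqref{greater} it is $\geq 0$. The recursion \eqref{discrete} then determines $\mathbf{w}(t_{j+1})$ uniquely, and $w_e(t_{j+1})\geq w_e(t_j)\geq w_{0,e}>0$. So the solution exists, is unique, is positive, and each $w_e(t_j)$ is non-decreasing in $j$.

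\emph{Edgewise dichotomy.} Since each sequence $w_e(t_j)$ is monotone, for each $e$ it either tends to $+\infty$ or converges to some $w_e^\ast\geq w_{0,e}>0$. If $w_e(t_j)$ converges, then telescoping gives
$$w_e(t_{J})-w_{0,e}=s\sum_{j<J}\Theta_e^\alpha(t_j).$$
Hence $\sum_j\Theta_e^\alpha(t_j)<\infty$, and since every term is non-negative, $\Theta_e^\alpha(t_j)\to0$. It remains to show that the set of convergent edges is either empty or all of $E$.

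\emph{Main step: excluding mixed behaviour.} Suppose, for contradiction, that both types occur. Since $G$ is connected, there is a vertex $x$ incident to a convergent edge $e=xy$ and to a divergent edge $xz$. Because $y\sim x$, both \eqref{mu-1} and \eqref{mu-2} give
$$\mu_x^\alpha(y)\leq(1-\alpha)\frac{w_{xy}(t_j)}{\sum_{u\sim x}w_{xu}(t_j)}\leq (1-\alpha)\frac{w_{xy}(t_j)}{w_{xz}(t_j)}\to0.$$
On the other hand, $\mu_y^\alpha(y)=\alpha$ for all $j$. Now $\Theta_e^\alpha$ is the sum of two KL divergences, each non-negative. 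I will keep only $\mathrm{KL}(\mu_y^\alpha\|\mu_x^\alpha)$ and apply Pinsker's inequality:
$$\Theta_e^\alpha(t_j)\geq \tfrac12\|\mu_y^\alpha-\mu_x^\alpha\|_{1}^2\geq\tfrac12\bigl|\alpha-\mu_x^\alpha(y)\bigr|^2\to\tfrac{\alpha^2}{2}>0.$$
This contradicts $\Theta_e^\alpha(t_j)\to0$. Pinsker can be replaced by a direct elementary bound if one prefers to avoid citing it.

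Therefore either every edge diverges, giving alternative (i), or every edge converges to $w_e^\ast>0$ with $\Theta_e^\alpha(t_j)\to0$, giving alternative (ii). The two alternatives are mutually exclusive. The main obstacle is this mixed-case exclusion, and the key observation that resolves it is that the laziness mass $\alpha$ at $y$ is fixed, while the neighbour transition probability $\mu_x^\alpha(y)$ collapses whenever another edge at $x$ blows up.
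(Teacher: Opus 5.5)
Your proof is correct. It has the same skeleton as the paper's: monotonicity gives a per-edge dichotomy, connectedness produces a vertex $x$ with a convergent edge $xy$ and a divergent edge $xz$, and the contradiction comes from $\mu_y^\alpha(y)=\alpha$ while $\mu_x^\alpha(y)\le(1-\alpha)w_{xy}/w_{xz}\to0$.

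The difference is in how the mixed case is closed.

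\textbf{The paper's route.} It derives a lower bound that blows up,
$\Theta_{xy}^\alpha(t_j)\ge\alpha\log\bigl(w_{xz}(t_j)/w_{xy}(t_j)\bigr)-C$.
To get this, it isolates the $z=y$ term of $\mathrm{KL}(\mu_y^\alpha\|\mu_x^\alpha)$ and bounds each remaining, possibly negative, term below by $p\log p\ge -e^{-1}$. It then sums the recursion to force $w_{xy}(t_j)\ge s(j-N)\to\infty$. Only afterwards, in case (ii), does it show $\Theta_e^\alpha(t_j)\to0$.

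\textbf{Your route.} You first show $\Theta_e^\alpha(t_j)\to0$ for every convergent edge, by summability. After that you only need a bounded positive lower bound. Pinsker supplies it without any term-by-term sign bookkeeping: $\Theta_{xy}^\alpha\ge\frac12|\alpha-\mu_x^\alpha(y)|^2\to\alpha^2/2$.

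\textbf{What each buys.} Your version is slightly cleaner and uses only a standard inequality. The paper's version gives more quantitative information: the entropy on such an edge grows at least logarithmically in the weight ratio. That makes its contradiction independent of knowing that $\Theta$ decays on convergent edges.
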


We note that the support of the local random walk at the edge $e=xy$ is $B_1(x)\cup B_1(y)$, where $B_1(x)$ denotes the ball centered at $x$ with radius $1$. Here, the radius denotes the combinatorial distance. Similarly, for any integer $\ell\geq 1$, if $B_\ell(x)\cup B_\ell(y)$ is a non-trivial subset of $V$, one can define local random walks supported on $B_\ell(x)\cup B_\ell(y)$. The nowhere-zero random walk in \cite{entropyflow} corresponds to the extreme case where the support is the entire set of vertex $V$.
By replacing $\mu_x^\alpha$ and $\mu_y^\alpha$ with random walks supported in $B_\ell(x)\cup B_\ell(y)$, we can define the corresponding edge entropy $\Theta_e^\alpha$ and entropy flow. The analogous results of Theorems \ref{existence} and \ref{discrete-existence} still hold for such generalized flows.

\subsection{Cohesion flow}
In this subsection, we define the cohesion  based on local neighborhood structures and construct the corresponding cohesion flow. We then present the main theoretical results on its well-posedness and long-time behavior, together with a discrete version for numerical implementation.

\subsubsection{Cohesion}
Let $G=(V,E,\mathbf{w})$ be a connected finite weighted graph, where
$V=\{x_1,x_2,\dots,x_n\}$
is the vertex set,
$E=\{e_1,e_2,\dots,e_m\}$
is the edge set, and
$\mathbf{w}=(w_e)_{e\in E}$
is the edge weight vector satisfying $w_e>0$ for all $e\in E$.
For any vertex $x\in V$, we denote its closed neighborhood by
$$\mathscr{N}_x = \{u\in V: u\sim x \text{ or } u=x\}, \quad \mathscr{N}_y = \{u\in V: u\sim y \text{ or } u=y\}.$$
For any edge $e=xy\in E$, we denote the set of common neighbors of $x$ and $y$ by
$$
\mathscr N_{xy}
=
\{z\in V:z\sim x \text{ and } z\sim y\}.
$$
To characterize the local structural relationship around the edge $e=xy$, we define
\begin{equation}\label{cohesion-A}
A_{xy}
=
\sum_{u\in \mathscr N_x\setminus \mathscr N_y}\frac1{w_{xu}}
+
\sum_{u\in \mathscr N_y\setminus \mathscr N_x}\frac1{w_{yu}},
\end{equation}
and
\begin{equation}\label{cohesion-B}
B_{xy}
=
\sum_{z\in \mathscr N_{xy}}
\left(
\frac1{w_{xz}}
+
\frac1{w_{yz}}
\right).
\end{equation}
We then define the quantity of local structure associated with the edge $e=xy$ by
\begin{equation}\label{cohesion-tilde}
\widetilde{C}_{xy} =
\begin{cases}
\displaystyle \frac{A_{xy}-B_{xy}}{A_{xy}+B_{xy}}, & \text{if } A_{xy}+B_{xy}>0,\\[1.5em]
0, & \text{if } A_{xy}+B_{xy}=0,
\end{cases}
\end{equation}
where $A_{xy}$ and $B_{xy}$ are given by \eqref{cohesion-A} and \eqref{cohesion-B}, respectively.
The cohesion of $e=xy$ is defined by
\begin{equation}\label{cohesion-C}
C_{xy} = \exp(\widetilde{C}_{xy}),
\end{equation}
where $\widetilde{C}_{xy}$ is defined in \eqref{cohesion-tilde}.
Since $-1\leq \widetilde C_{xy}\leq 1$, it follows from \eqref{cohesion-C} that $e^{-1}\leq C_{xy}\leq e$.

\subsubsection{Cohesion flow}
For any edge $e=xy\in E$, let $w_{0,e}>0$ be the initial weight of e. We define the cohesion flow as
\begin{equation}\label{cohesion-flow}
\begin{cases}
w_e^{\prime}(t) = C_e(t), & t>0,\\[1.2ex]
w_e(t) > 0, & t>0,\\[1.2ex]
w_e(0) = w_{0,e}, & \forall e\in E.
\end{cases}
\end{equation}
Here $C_e(t)$ denotes the cohesion of edge $e$ computed with respect to weight vector $\mathbf{w}(t)=(w_e(t))_{e \in E}$ at time $t$.
Since $C_e(t)>0$ for all $t\geq 0$, we have $w_e^{\prime}(t)>0$, and hence $w_e(t)$ is strictly increasing and satisfies $w_e(t)\geq w_{e}(0)=w_{0,e}>0$ for all $t\geq 0$ and every $e \in E$.

Next, we introduce a discrete version of the continuous cohesion flow \eqref{cohesion-flow} for numerical implementation. Fix a time step size $s>0$, and let $t_j=js$, $j=0,1,2,\cdots$. The discrete cohesion flow is defined as
\begin{equation}\label{discrete-cohesion-flow}
\begin{cases}
w_e(t_{j+1}) = w_e(t_j)+sC_e(t_j),\\[1.5ex]
w_e(t_j)>0,\quad j=0,1,2,\cdots,\\[1.5ex]
w_e(0)=w_{0,e},\quad \forall e\in E.
\end{cases}
\end{equation}
Here $C_e(t_j)$ denotes the cohesion computed with respect to the weight vector $\mathbf{w}(t_j)=(w_e(t_j))_{e\in E}$. Since $C_e(t_j)>0$ for all $j\geq0$, the sequence $\{w_e(t_j)\}_{j=0}^{\infty}$ is strictly increasing for every edge $e\in E$.

\subsubsection{Main results}
Analogous to the results of the local entropy flow, we have the following theorems for the cohesion flow.

\begin{theorem}\label{cohesion-existence}
Let $G=(V,E,\mathbf{w}_0)$ be a connected finite weighted graph with initial weight $\mathbf{w}_0\in\mathbb{R}_+^m$. Then the cohesion flow \eqref{cohesion-flow} admits a unique global solution $\mathbf{w}(t)=(w_e(t))_{e\in E}\in\mathbb{R}_+^m$ for all $t\in[0,+\infty)$. Moreover, for every edge $e\in E$, $w_e(t)$ is strictly increasing and $\lim\limits_{t\to+\infty} w_e(t) = +\infty$.
\end{theorem}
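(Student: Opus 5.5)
We would prove Theorem \ref{cohesion-existence} with the standard ODE argument: local existence and uniqueness from Picard--Lindelöf, then continuation to a global solution using a priori bounds.

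\textbf{Step 1: regularity of the vector field.} Write the flow as $\mathbf{w}'(t)=F(\mathbf{w}(t))$, where $F=(C_e)_{e\in E}:\mathbb{R}_+^m\to\mathbb{R}^m$. We claim $F$ is smooth, hence locally Lipschitz, on the open set $\mathbb{R}_+^m$.
\begin{itemize}
\item The neighborhood sets $\mathscr N_x\setminus\mathscr N_y$ and $\mathscr N_{xy}$ depend only on the graph, not on the weights.
\item Consequently $A_{xy}$ and $B_{xy}$ are finite sums of terms $1/w_{e'}$, which are smooth on $\mathbb{R}_+^m$.
\item The two cases in \eqref{cohesion-tilde} are fixed by the combinatorics of the edge. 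If the relevant sets are all empty, then $A_{xy}+B_{xy}=0$ identically and $\widetilde C_{xy}\equiv 0$.
\item Otherwise $A_{xy}+B_{xy}>0$ on all of $\mathbb{R}_+^m$, since it contains at least one positive term. Then $\widetilde C_{xy}$ is a quotient of smooth functions with nonvanishing denominator, so it is smooth.
\item Composing with $\exp$ keeps smoothness.
\end{itemize}
One subtlety: for $u=x$ itself, $u\in\mathscr N_x\setminus\mathscr N_y$ would contribute a term $1/w_{xx}$, which makes no sense. However, $x\in\mathscr N_y$ because $x\sim y$, so $x\notin\mathscr N_x\setminus\mathscr N_y$. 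Hence every summand $1/w_{xu}$ comes from a genuine edge. With smoothness in hand, the Cauchy--Lipschitz theorem gives a unique local solution on a maximal interval $[0,T)$ with $T>0$.

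\textbf{Step 2: a priori bounds.} We have $e^{-1}\le C_e\le e$. Since $w_e'=C_e>0$, each $w_e$ is strictly increasing, so $w_e(t)\ge w_{0,e}>0$. Integrating the upper bound gives $w_e(t)\le w_{0,e}+et$. On any bounded time interval the solution therefore stays in a compact subset of $\mathbb{R}_+^m$, namely
\[
\prod_{e\in E}\,[\,w_{0,e},\;w_{0,e}+eT\,],
\]
and in particular it stays away from the boundary $\{w_e=0\}$.

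\textbf{Step 3: globality, uniqueness and divergence.}
\begin{itemize}
\item By the standard extension theorem, a maximal solution with $T<\infty$ must leave every compact subset of $\mathbb{R}_+^m$ as $t\to T$. Step 2 rules this out, so $T=+\infty$.
\item Uniqueness on $[0,\infty)$ follows from the local Lipschitz property together with a Gronwall argument on compact time intervals.
\item Integrating the lower bound gives $w_e(t)\ge w_{0,e}+e^{-1}t\to+\infty$, which is the claimed limit.
\end{itemize}

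\textbf{Main obstacle.} The only delicate point is Step 1: checking that the case distinction in \eqref{cohesion-tilde} creates no discontinuity of $F$. The observation that settles it is that the sign of $A_{xy}+B_{xy}$ is determined by the fixed graph structure, not by the weights, so a trajectory can never cross between the two cases. Everything else is routine, because $C_e$ is bounded above and below by positive constants.
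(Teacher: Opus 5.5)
Your proof is correct and matches the paper's argument: smoothness (hence local Lipschitz continuity) of the cohesion vector field gives a unique local solution via Picard--Lindel\"of, and the bounds $e^{-1}\le C_e\le e$ yield $w_{0,e}+e^{-1}t\le w_e(t)\le w_{0,e}+et$, which give global extension, strict monotonicity and divergence to $+\infty$. Your remark that the case distinction in \eqref{cohesion-tilde} is fixed by the graph structure, so that it cannot break smoothness, makes explicit a point the paper leaves implicit when it calls $C_e$ a composition of rational functions and $\exp$.
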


Next, we establish the corresponding result for the discrete cohesion flow \eqref{discrete-cohesion-flow}.

\begin{theorem}\label{discrete-cohesion-existence}
Let $G=(V,E,\mathbf{w}_0)$ be a connected finite weighted graph with $\mathbf{w}_0\in\mathbb{R}_+^m$. Then for any step size $s>0$, the discrete cohesion flow \eqref{discrete-cohesion-flow} admits a unique solution $\mathbf{w}(t_j)=(w_e(t_j))_{e\in E}$ for all $j=0,1,2,\cdots$. Moreover, for every edge $e\in E$,  $w_e(t_j)$ is strictly increasing and $\lim\limits_{j\to+\infty} w_e(t_j) = +\infty$.
\end{theorem}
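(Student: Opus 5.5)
The discrete cohesion flow \eqref{discrete-cohesion-flow} is an explicit recursion, so I would argue by induction on $j$. Existence and uniqueness come first. Assume $\mathbf{w}(t_j)\in\mathbb{R}_+^m$ is uniquely determined; for $j=0$ this is the hypothesis $\mathbf{w}_0\in\mathbb{R}_+^m$.

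Since every weight is positive, each reciprocal $1/w_{xu}$ appearing in \eqref{cohesion-A} and \eqref{cohesion-B} is a well-defined positive number. Hence $A_{xy}\geq 0$ and $B_{xy}\geq 0$, and $\widetilde{C}_{xy}$ is uniquely defined by \eqref{cohesion-tilde}. Both cases there give a value in $[-1,1]$. When $A_{xy}+B_{xy}>0$ this is because $|A_{xy}-B_{xy}|\leq A_{xy}+B_{xy}$; in the other case $\widetilde C_{xy}=0$. Consequently $C_e(t_j)=\exp(\widetilde C_e(t_j))$ is a uniquely determined number in $[e^{-1},e]$. The recursion then defines $w_e(t_{j+1})=w_e(t_j)+sC_e(t_j)$ uniquely. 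This new weight satisfies $w_e(t_{j+1})\geq w_e(t_j)+se^{-1}>w_e(t_j)>0$, so $\mathbf{w}(t_{j+1})\in\mathbb{R}_+^m$ and the positivity constraint in \eqref{discrete-cohesion-flow} is preserved. Induction gives a unique solution for all $j$.

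The monotonicity and divergence then follow directly from the uniform lower bound $C_e\geq e^{-1}$. The inequality above shows that $w_e(t_j)$ is strictly increasing. Summing the recursion, we get
\[
w_e(t_j)=w_{0,e}+s\sum_{k=0}^{j-1}C_e(t_k)\geq w_{0,e}+\frac{sj}{e},
\]
which tends to $+\infty$ as $j\to+\infty$. For completeness, the upper bound $C_e\leq e$ gives $w_e(t_j)\leq w_{0,e}+sej$, so the growth is exactly linear, although this is not needed.

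No step is a genuine obstacle. The only point needing care is the degenerate case $A_{xy}+B_{xy}=0$, and the definition \eqref{cohesion-tilde} already handles it by setting $\widetilde C_{xy}=0$. One should also check that the bound $\widetilde{C}_{xy}\in[-1,1]$ holds at every step regardless of how the weights have evolved, which the positivity induction guarantees. Unlike the entropy flow, no compactness or limit argument is needed, because the increment is bounded below by the positive constant $s/e$.
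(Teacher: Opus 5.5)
Your proposal is correct and follows the paper's proof: the explicit recursion gives existence and uniqueness, the bound $e^{-1}\le C_e\le e$ gives $w_{0,e}+sje^{-1}\le w_e(t_j)\le w_{0,e}+sje$, and this yields strict monotonicity and divergence. Your induction on positivity of the weights, which keeps the reciprocals in $A_{xy}$ and $B_{xy}$ well defined at every step, makes explicit a point the paper leaves implicit.
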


The proofs of Theorems \ref{existence}- \ref{discrete-cohesion-existence} will be presented in Section \ref{theorems}.

\section{Examples}\label{examples}
In this section, we investigate the behavior of the local entropy flow and the cohesion flow through several examples. We first present the computation of the two flows in both continuous and discrete settings, together with the corresponding edge weight evolution processes. We then show that both flows can reveal the community structure of a network. Finally, we provide a representative example to demonstrate that the two flows may exhibit different behaviors in certain cases.

We begin with an example of the continuous flows.

\begin{example}\label{ex-01}
Let $\alpha\in(0,1)$, and let $G=(V,E,\mathbf{w}_0)$ be the regular hexagon shown in Figure \ref{fig-hexagon}. The vertex set is $V=\{x,y,z,u,v,w\}$, the edge set is $E=\{xy,yz,zu,uv,vw,wx\}$, and all edges are assigned an initial weight of $1$.

By the symmetry of the regular hexagon, the edge entropy is identical for all edges. We denote the entropy vector by $\mathbf{\Theta}^\alpha=(\Theta^\alpha_{xy},\Theta^\alpha_{yz},\Theta^\alpha_{zu},\Theta^\alpha_{uv},\Theta^\alpha_{vw},\Theta^\alpha_{wx})$, where $\Theta^\alpha_e(t)$ takes the same value for all $e\in E$. The continuous local entropy flow
$$\mathbf{w}^\prime (t)=\mathbf{\Theta}^\alpha(t),\quad \mathbf{w}(0)=\mathbf{w}_0$$
admits a unique global solution with uniform edge weights $\mathbf{w}(t)=(w(t),w(t),w(t),w(t),w(t),w(t))$, where
\begin{equation*}
w(t) = 1 + \left[ \alpha(1+\alpha)\log\frac{2}{1-\alpha} + (1-\alpha)(2-3\alpha)\log\frac{2(1-\alpha)}{\alpha} \right] t,\quad t\in[0,+\infty).
\end{equation*}
Similarly, all edges have the same cohesion value. For any edge $e\in E$, the two endpoints have one exclusive neighbor and no common neighbors. Hence
$C_e=e.$
The continuous cohesion flow
$$
\mathbf w^{\prime}(t)=\mathbf{C}(t),\qquad
\mathbf w(0)=\mathbf w_0
$$
admits the unique global solution
$\mathbf w(t)=(w(t),w(t),w(t),w(t),w(t),w(t))$,
where
$$
w(t)=1+et,\qquad t\in[0,+\infty).
$$

\begin{figure}[H]
\centering
\begin{tikzpicture}[scale=0.6, every node/.style={font=\small}]

\coordinate (z) at (0:2);
\coordinate (u) at (-60:2);
\coordinate (v) at (-120:2);
\coordinate (w) at (180:2);
\coordinate (x) at (120:2);
\coordinate (y) at (60:2);

\draw[thick] (x) -- (y) -- (z) -- (u) -- (v) -- (w) -- cycle;

\node[above left]  at (x) {$x$};
\node[above right] at (y) {$y$};
\node[right]       at (z) {$z$};
\node[below right] at (u) {$u$};
\node[below left]  at (v) {$v$};
\node[left]        at (w) {$w$};

\end{tikzpicture}
\caption{Regular hexagon graph}
\label{fig-hexagon}
\end{figure}
\end{example}

We next present an example of the two discrete flows.

\begin{example}\label{ex-02}
We consider a graph consisting of two squares connected by four edges, as shown in Figure \ref{fig-entropy-flow-2}. All edges are assigned an initial weight of $1$.

Take the edge $x_1x_2$ as an illustration. Its $1$-step neighborhoods are $\mathscr{N}_{x_1}=\{x_1,x_2,x_4,x_5\}$ and $\mathscr{N}_{x_2}=\{x_2,x_1,x_3,x_6\}$. Since $\mathscr{N}_{x_2}\setminus\mathscr{N}_{x_1}\neq\varnothing$, the paired local random walks are given by
\[
\mu_{x_1}^\alpha(z)=
\begin{cases}
\alpha, & z=x_1,\\[4pt]
\frac{\alpha(1-\alpha)}{3}, & z\in\{x_2,x_4,x_5\},\\[8pt]
\frac{(1-\alpha)^2}{2}, & z\in\{x_3,x_6\},
\end{cases}
\qquad
\mu_{x_2}^\alpha(z)=
\begin{cases}
\alpha, & z=x_2,\\[4pt]
\frac{\alpha(1-\alpha)}{3}, & z\in\{x_1,x_3,x_6\},\\[8pt]
\frac{(1-\alpha)^2}{2}, & z\in\{x_4,x_5\}.
\end{cases}
\]
A straightforward calculation yields the initial edge entropy
\begin{align*}
\Theta_{x_1x_2}^\alpha(0)&=\sum_{z\in \mathscr{N}_{x_1}\cup\mathscr{N}_{x_2}}\left(\mu_{x_1}^\alpha(z)\log\frac{\mu_{x_1}^\alpha(z)}{\mu_{x_2}^\alpha(z)}
+\mu_{x_2}^\alpha(z)\log\frac{\mu_{x_2}^\alpha(z)}{\mu_{x_1}^\alpha(z)}\right)\\
&=\frac{4\alpha+2\alpha^2}{3}\log\frac{3}{1-\alpha}+
\frac{2(1-\alpha)(5\alpha-3)}{3}\log\frac{2\alpha}{3(1-\alpha)}.
\end{align*}

Recall the discrete local entropy flow \eqref{discrete}, where $t_j=js$ with $s>0$ and $j=0,1,2,\cdots$. By the symmetry of the graph and the uniform initial edge weights, the initial entropy is identical for all edges, i.e., $\Theta_e^\alpha(0)=\Theta_{x_1x_2}^\alpha(0)$ for every $e\in E$. It follows that all edge weights remain equal at any time $t$:
$$w_e(t)=w_{x_1x_2}(t),\quad\forall e\in E,\,\,\forall t>0.$$
The edge entropy is constant along the flow:
$$\Theta_{e}^\alpha(t)\equiv \Theta_e^\alpha(0)=\Theta_{x_1x_2}^\alpha(0),\quad \forall e\in E,\,\,\forall t>0.$$
Therefore, the edge weight at time $t_j$ is given by
\[
w_{x_1x_2}(t_j)=1+s\sum_{k=0}^{j-1} \Theta_{x_1x_2}^\alpha(t_k)=1+js\Theta_{x_1x_2}^\alpha(0),\quad\forall j=1,2,\cdots.
\]
In particular, if we take $\alpha=0.5$, then
$$w_{e}(t_j)=1+\left(\frac{2}{3}\log 2+\log 3\right)js,\quad \forall e\in E,\,\,\forall j=1,2,\cdots.$$
Next, we compute the cohesion for each edge. 
Consider again the edge $e=x_1x_2$. We first compute the sets of exclusive and common neighbors:
\[
\mathscr{N}_{x_1}\setminus \mathscr{N}_{x_2}=\{x_4,x_5\},\quad
\mathscr{N}_{x_2}\setminus \mathscr{N}_{x_1}=\{x_3,x_6\},\quad
\mathscr{N}_{x_1x_2}=\emptyset.
\]
Hence, $A_{x_1x_2}=4$, $B_{x_1x_2}=0$,
which yields $C_{x_1x_2}=\exp(1)=e$.
By symmetry of the graph, the same calculation holds for every edge $e\in E$, and thus
\begin{equation*}
C_e(t_j)\equiv e,\quad \forall e\in E,\ \forall j\ge 0.
\end{equation*}
Consequently, the discrete cohesion flow \eqref{discrete-cohesion-flow} reduces to
\[
w_e(t_{j+1})=w_e(t_j)+s e,\qquad w_e(0)=1,
\]
which admits the explicit solution
\[
w_e(t_j)=1+jes,\quad \forall e\in E,\ \forall j\ge 0.
\]

\end{example}

\begin{figure}[H]
\centering
\begin{tikzpicture}[scale=0.6, every node/.style={font=\small}]
\coordinate (x1) at (-1, 1);
\coordinate (x2) at ( 1, 1);
\coordinate (x3) at ( 1,-1);
\coordinate (x4) at (-1,-1);
\coordinate (x5) at (-2, 2);
\coordinate (x6) at ( 2, 2);
\coordinate (x7) at ( 2,-2);
\coordinate (x8) at (-2,-2);

\draw[thick] (x1) -- (x2) -- (x3) -- (x4) -- cycle;
\draw[thick] (x5) -- (x6) -- (x7) -- (x8) -- cycle;
\draw[thick] (x1) -- (x5);
\draw[thick] (x2) -- (x6);
\draw[thick] (x3) -- (x7);
\draw[thick] (x4) -- (x8);

\node[left] at (x1) {$x_1$};
\node[right] at (x2) {$x_2$};
\node[right] at (x3) {$x_3$};
\node[left] at (x4) {$x_4$};
\node[above left] at (x5) {$x_5$};
\node[above right] at (x6) {$x_6$};
\node[below right] at (x7) {$x_7$};
\node[below left] at (x8) {$x_8$};

\end{tikzpicture}
\caption{Two squares connected by four edges}
\label{fig-entropy-flow-2}
\end{figure}

We then demonstrate community detection via the discrete local entropy flow and the cohesion flow in the following example.

\begin{example}\label{ex-03}
Let $G=(V,E,\mathbf{w}_0)$ be a graph composed of two triangles connected by the edge $x_3x_4$, as shown in Figure \ref{fig-entropy-flow-1}. All edges are assigned an initial weight of $1$. Take $\alpha=0.5$.
The initial edge entropies corresponding to $\mathbf{w}_0$ are computed as
$\Theta_{x_3x_4}^\alpha(0) = 1.56$, $\Theta_{x_1x_2}^\alpha(0) = \Theta_{x_5x_6}^\alpha(0) = 0.35,$ $\Theta_{x_1x_3}^\alpha(0) = \Theta_{x_2x_3}^\alpha(0) = \Theta_{x_4x_5}^\alpha(0)= \Theta_{x_4x_6}^\alpha(0)= 0.93.$
Choose step size $s=0.1$ and $t_j=js$. The discrete local entropy flow is given by
\begin{equation*}
w_e(t_{j+1})=w_e(t_j)+s\Theta_e^\alpha(t_j),
\quad
w_e(0)=1.
\end{equation*}
At time $t_{10}$, the edge weights evolve to
$w_{x_3x_4}(t_{10}) = 2.47,$ $w_{x_1x_2}(t_{10}) = w_{x_5x_6}(t_{10}) = 1.42,$
$w_{x_1x_3}(t_{10}) =
w_{x_2x_3}(t_{10}) =
w_{x_4x_5}(t_{10}) =
w_{x_4x_6}(t_{10}) = 1.91.$
The initial cohesion values are given by
$C_{x_3x_4}(0)=e$, $C_{x_1x_2}(0)=C_{x_5x_6}(0)=e^{-1}$, $C_{x_1x_3}(0)=C_{x_2x_3}(0)=C_{x_4x_5}(0)=C_{x_4x_6}(0)=e^{-1/3}.$
The discrete cohesion flow is given by
\begin{equation*}
w_e(t_{j+1})=w_e(t_j)+sC_e(t_j),
\quad
w_e(0)=1.
\end{equation*}
At time $t_{10}$, the edge weights evolve to
$w_{x_3x_4}(t_{10}) = 3.72$, $w_{x_1x_2}(t_{10}) = w_{x_5x_6}(t_{10}) = 1.37,$ $w_{x_1x_3}(t_{10}) =
w_{x_2x_3}(t_{10}) =
w_{x_4x_5}(t_{10}) =
w_{x_4x_6}(t_{10}) = 1.59.$
Both flows consistently amplify the difference between inter-community and intra-community edges. In both cases, the bridge edge $x_3x_4$ exhibits the fastest growth rate, and its removal decomposes the graph into two disconnected triangles, which correspond to two natural communities.
This example demonstrates that both flows successfully identify the bridge edge connecting the two communities and therefore recover the underlying community structure.
\end{example}

\begin{figure}[H]
\centering

\definecolor{highE}{RGB}{200,50,50}
\definecolor{midE}{RGB}{50,80,200}
\definecolor{lowE}{RGB}{70,70,70}

\begin{tikzpicture}[scale=1.1]

\begin{scope}
\coordinate (x1) at (-0.5,0.5);
\coordinate (x2) at (-0.5,-0.5);
\coordinate (x3) at (0.2,0);
\coordinate (x4) at (1.3,0);
\coordinate (x5) at (2,0.5);
\coordinate (x6) at (2,-0.5);

\draw[lowE, thick] (x1)--(x2)
    node[midway,left] {$w=1$};
\draw[lowE, thick] (x5)--(x6);

\draw[midE, thick] (x1)--(x3)
    node[midway,right=3pt] {$w=1$};
\draw[midE, thick] (x2)--(x3);
\draw[midE, thick] (x4)--(x5);
\draw[midE, thick] (x4)--(x6);

\draw[highE, very thick] (x3)--(x4)
    node[midway,below] {$w=1$};

\foreach \p in {x1,x2,x3,x4,x5,x6}
    \fill (\p) circle (1.5pt);

\node[above left] at (x1) {$x_1$};
\node[below left] at (x2) {$x_2$};
\node[below] at (x3) {$x_3$};
\node[below] at (x4) {$x_4$};
\node[above right] at (x5) {$x_5$};
\node[below right] at (x6) {$x_6$};
\end{scope}

\draw[->, thick] (3.3,0) -- (4.2,0)
    node[midway,above] {\scriptsize entropy flow};

\begin{scope}[shift={(6.2,0)}]
\coordinate (y1) at (-1,0.8);
\coordinate (y2) at (-1,-0.8);
\coordinate (y3) at (-0.1,0);
\coordinate (y4) at (2.1,0);
\coordinate (y5) at (3,0.8);
\coordinate (y6) at (3,-0.8);

\draw[lowE, thick] (y1)--(y2);
\draw[lowE, thick] (y5)--(y6)
 node[midway,right] {$w=1.42$};

\draw[midE, thick] (y1)--(y3)
    node[midway,right=3pt] {$w=1.91$};
\draw[midE, thick] (y2)--(y3);
\draw[midE, thick] (y4)--(y5);
\draw[midE, thick] (y4)--(y6);

\draw[highE, very thick] (y3)--(y4)
    node[midway,below] {$w=2.47$};

\foreach \p in {y1,y2,y3,y4,y5,y6}
    \fill (\p) circle (1.5pt);

\node[above left] at (y1) {$x_1$};
\node[below left] at (y2) {$x_2$};
\node[below] at (y3) {$x_3$};
\node[below] at (y4) {$x_4$};
\node[above right] at (y5) {$x_5$};
\node[below right] at (y6) {$x_6$};
\end{scope}

\end{tikzpicture}

\vspace{0.6cm}

\begin{tikzpicture}[scale=1.1]

\draw[->, thick] (-1.2,0) -- (0.8,0)
    node[midway,above] {\scriptsize surgery};

\begin{scope}[shift={(2.5,0)}]
\coordinate (z1) at (-1,0.8);
\coordinate (z2) at (-1,-0.8);
\coordinate (z3) at (-0.1,0);
\coordinate (z4) at (2.1,0);
\coordinate (z5) at (3,0.8);
\coordinate (z6) at (3,-0.8);

\draw (z1)--(z2) (z2)--(z3) (z3)--(z1);
\draw (z4)--(z5) (z5)--(z6) (z6)--(z4);

\foreach \p in {z1,z2,z3,z4,z5,z6}
    \fill (\p) circle (1.5pt);

\node[above left] at (z1) {$x_1$};
\node[below left] at (z2) {$x_2$};
\node[below] at (z3) {$x_3$};
\node[below] at (z4) {$x_4$};
\node[above right] at (z5) {$x_5$};
\node[below right] at (z6) {$x_6$};
\end{scope}

\end{tikzpicture}

\caption{Community detection via discrete local entropy flow}
\label{fig-entropy-flow-1}
\end{figure}

In the final example, we illustrate that the local entropy flow and the cohesion flow may yield different community partitions.

\begin{example}\label{ex-04}
Let $G=(V,E,\mathbf w_0)$ be the graph shown in Figure~\ref{fig4}. All edges are assigned initial weight $1$, and let $\alpha=0.5$. The initial values of the edge entropy are
$\Theta_{x_1y_1}^{\alpha}(0)=\Theta_{y_1z_1}^{\alpha}(0)=
\Theta_{z_1x_1}^{\alpha}(0)=1.98$,
$\Theta_{x_1x_2}^{\alpha}(0)=\Theta_{x_1x_4}^{\alpha}(0)=\Theta_{y_1y_2}^{\alpha}(0)=\Theta_{y_1y_4}^{\alpha}(0)=\Theta_{z_1z_2}^{\alpha}(0)=\Theta_{z_1z_4}^{\alpha}(0)=1.54$,
and $\Theta_{x_2x_3}^{\alpha}(0)=\Theta_{x_3x_4}^{\alpha}(0)=\Theta_{y_2y_3}^{\alpha}(0)=\Theta_{y_3y_4}^{\alpha}(0)=\Theta_{z_2z_3}^{\alpha}(0)=\Theta_{z_3z_4}^{\alpha}(0)=1.22$.
Using step size $s=0.1$, the discrete local entropy flow \eqref{discrete} yields
$w_{x_1y_1}(t_{10})=w_{y_1z_1}(t_{10})=
w_{z_1x_1}(t_{10})=2.96$,
$w_{x_1x_2}(t_{10})=w_{x_1x_4}(t_{10})=w_{y_1y_2}(t_{10})=w_{y_1y_4}(t_{10})=w_{z_1z_2}(t_{10})=w_{z_1z_4}(t_{10})=2.55$,
and $w_{x_2x_3}(t_{10})=w_{x_3x_4}(t_{10})=w_{y_2y_3}(t_{10})=w_{y_3y_4}(t_{10})=w_{z_2z_3}(t_{10})=w_{z_3z_4}(t_{10})=2.23$.
Deleting the three edges with the largest weights, namely $x_1y_1$, $y_1z_1$, and $z_1x_1$, the graph $G$ is partitioned into three connected components.

The corresponding initial values of the cohesion are
$C_{x_1y_1}(0)=C_{y_1z_1}(0)=
C_{z_1x_1}(0)=1.40$, while all remaining edges satisfy
$C_e(0)=2.72.$
Using the same step size $s=0.1$, after the discrete cohesion flow
\eqref{discrete-cohesion-flow},
at time $t_{10}$, the evolved edge weights are
$w_{x_1y_1}(t_{10})=w_{y_1z_1}(t_{10})=
w_{z_1x_1}(t_{10})=2.21$, while all remaining edges satisfy
$w_e(t_{10})=3.72.$
Deleting the three edges with the largest  weights produces a community partition different from that obtained by the local entropy flow. This example shows that the local entropy flow and the cohesion flow may rank edges differently, and hence may lead to different community detection results.
\end{example}

\begin{figure}[H]
\centering

\def\L{2}
\pgfmathsetmacro{\H}{\L*sqrt(3)/2}
\pgfmathsetmacro{\d}{\L/sqrt(2)}

\begin{tikzpicture}[scale=0.6, thick]

\coordinate (A) at (0,\H);
\coordinate (B) at (-\L/2,0);
\coordinate (C) at (\L/2,0);

\draw[very thick] (A) -- (B);
\draw[very thick] (B) -- (C);
\draw[very thick] (C) -- (A);

\begin{scope}[shift=(A), rotate=0]
    \draw[thick] (0,0) -- (-\d,\d);
    \draw[thick] (-\d,\d) -- (0,2*\d);
    \draw[thick] (0,2*\d) -- (\d,\d);
    \draw[thick] (\d,\d) -- (0,0);
    \fill (-\d,\d) circle (2pt);
    \fill (0,2*\d) circle (2pt);
    \fill (\d,\d) circle (2pt);
    \node[above left] at (-\d,\d) {$x_2$};
    \node[above] at (0,2*\d) {$x_3$};
    \node[above right] at (\d,\d) {$x_4$};
\end{scope}

\begin{scope}[shift=(B), rotate=120]
    \draw[thick] (0,0) -- (-\d,\d);
    \draw[thick] (-\d,\d) -- (0,2*\d);
    \draw[thick] (0,2*\d) -- (\d,\d);
    \draw[thick] (\d,\d) -- (0,0);
    \fill (-\d,\d) circle (2pt);
    \fill (0,2*\d) circle (2pt);
    \fill (\d,\d) circle (2pt);
    \node[below left] at (-\d,\d) {$y_4$};
    \node[below] at (0,2*\d) {$y_3$};
    \node[left] at (\d,\d) {$y_2$};
\end{scope}

\begin{scope}[shift=(C), rotate=240]
    \draw[thick] (0,0) -- (-\d,\d);
    \draw[thick] (-\d,\d) -- (0,2*\d);
    \draw[thick] (0,2*\d) -- (\d,\d);
    \draw[thick] (\d,\d) -- (0,0);
    \fill (-\d,\d) circle (2pt);
    \fill (0,2*\d) circle (2pt);
    \fill (\d,\d) circle (2pt);
    \node[right] at (-\d,\d) {$z_4$};
    \node[above right] at (0,2*\d) {$z_3$};
    \node[above right] at (\d,\d) {$z_2$};
\end{scope}

\fill (A) circle (2pt) node[left] {$x_1$};
\fill (B) circle (2pt) node[below left] {$y_1$};
\fill (C) circle (2pt) node[below right] {$z_1$};

\end{tikzpicture}
\caption{Three squares connected by a triangle}
\label{fig4}
\end{figure}

\section{Proofs of the main theorems}\label{theorems}

In this section, we present the proofs of the existence, uniqueness, and convergence of solutions to the continuous local entropy flow \eqref{entropy-flow} and the discrete local entropy flow \eqref{discrete}. All proofs are established based on classical ordinary differential equation (ODE) theory. We also provide the corresponding proofs for the cohesion flow \eqref{cohesion-flow} and the discrete cohesion flow \eqref{discrete-cohesion-flow}.

\medskip
\noindent\textbf{Proof of Theorem \ref{existence}}.
We follow the proof framework of Theorem 2.1 in our previous work \cite{entropyflow}, with adjustments adapted to the local random walk setting. Let $\alpha\in(0,1)$, and for any edge $e=xy\in E$, let $\Theta_e^\alpha$ be the edge entropy defined in \eqref{theta}. The proof is divided into three steps.

\medskip
\noindent\textit{Step 1: Short-time existence and uniqueness of solutions}

In view of \eqref{greater}, we have
\begin{equation}\label{claim-1}
\Theta_e^\alpha\geq 0
\end{equation}
for all $e\in E$, with equality $\Theta_e^\alpha=0$ if and only if $\mu_x^\alpha(z)=\mu_y^\alpha(z)$ for all $z\in\mathscr{N}_x\cup\mathscr{N}_y$.

Let $E=\{e_1,e_2,\dots,e_m\}$, and denote
$$\mathbb{R}_+^m = \{\mathbf{w}=(w_1,w_2,\dots,w_m)\in\mathbb{R}^m: w_j>0 \text{ for all } 1\leq j\leq m\}.$$
Define the map $\mathbf{F}:\mathbb{R}_+^m\rightarrow\mathbb{R}^m$ by
$$\mathbf{F}(\mathbf{w})=(F_1(\mathbf{w}),F_2(\mathbf{w}),\dots,F_m(\mathbf{w}))^\top,$$
where $F_j(\mathbf{w})=\Theta_{e_j}^\alpha$ is the edge entropy of $e_j$ computed with respect to the weight vector $\mathbf{w}$.

By the Picard-Lindel\"of theorem, it suffices to show that $\mathbf{F}$ is locally Lipschitz continuous on $\mathbb{R}_+^m$. For any compact subset $\overline{\Omega}\subset\mathbb{R}_+^m$, and any two weight vectors $\mathbf{w},\widetilde{\mathbf{w}}\in\overline{\Omega}$, let $\mu_x^\alpha$ and $\widetilde{\mu}_x^\alpha$ be the local random walks induced by $\mathbf{w}$ and $\widetilde{\mathbf{w}}$, respectively. For each edge $e_j=x_jy_j\in E$, there exists a constant $C>0$, depending only on $m$, $e_j$, and the distance between $\overline{\Omega}$ and the boundary $\partial\mathbb{R}_+^m$, such that
$$
|\mu_{x_j}^\alpha(z)-\widetilde{\mu}_{x_j}^\alpha(z)|+|\mu_{y_j}^\alpha(z)-\widetilde{\mu}_{y_j}^\alpha(z)|\leq C\|\mathbf{w}-\widetilde{\mathbf{w}}\|
$$
for all $z\in\mathscr{N}_{x_j}\cup\mathscr{N}_{y_j}$. A straightforward calculation further yields
$$|F_j(\mathbf{w})-F_j(\widetilde{\mathbf{w}})|\leq C\|\mathbf{w}-\widetilde{\mathbf{w}}\|$$
for all $1\leq j\leq m$, which confirms that $\mathbf{F}$ is local Lipschitz on $\mathbb{R}_+^m$. Thus there exists $T_0>0$ such that the local entropy flow \eqref{entropy-flow} admits a unique solution $\mathbf{w}(t)=(w_{e_1}(t),w_{e_2}(t),\dots,w_{e_m}(t))$ on the time interval $[0,T_0]$.

\medskip
\noindent\textit{Step 2: Global existence of solutions}

Let
$$T^\ast=\sup\left\{T>0: \text{the flow \eqref{entropy-flow} admits a unique solution on } [0,T]\right\}.$$
We prove $T^\ast=+\infty$ by contradiction. Suppose $T^\ast<+\infty$. Then the unique solution $\mathbf{w}(t)$ of the initial value problem
$$
\begin{cases}
\mathbf{w}^\prime(t)=\mathbf{F}(\mathbf{w}(t)),& t\in[0,T^\ast),\\
w_e(t)>0,& \forall e\in E,\, t\in[0,T^\ast),\\
w_e(0)=w_{0,e},& \forall e\in E
\end{cases}
$$
cannot be extended beyond $T^\ast$.

From the non-negativity of edge entropy \eqref{claim-1}, we have $w_e^\prime(t)=\Theta_e^\alpha(t)\geq 0$ for all $e\in E$ and $t\in[0,T^\ast)$. Thus
$$w_e(t)\geq w_{0,e}>0,\quad \forall t\in[0,T^\ast),\,\forall e\in E.$$
Combined with the definition of $\Theta_e^\alpha$, this implies
$$\Theta_e^\alpha(t)\leq C\left(1+\log\sum_{\tau\in E}w_\tau(t)\right)$$
for some constant $C>0$ depending only on $m$, $n$, and the initial weight $\mathbf{w}_0$. Substituting this into the flow equation \eqref{entropy-flow}, we obtain
$$
\frac{d}{dt}\left(\sum_{\tau\in E}w_\tau(t)\right)\leq C\left(1+\log\sum_{\tau\in E}w_\tau(t)\right)\leq C\left(1+\sum_{\tau\in E}w_\tau(t)\right).
$$
By Gr\"onwall's inequality, we have
$$
0<\min_{\tau\in E}w_{0,\tau}\leq w_e(t)\leq\sum_{\tau\in E}w_\tau(t)\leq Ce^{Ct},\quad \forall t\in[0,T^\ast).
$$
This shows that every component of $\mathbf{w}(t)$ is uniformly bounded from above, and admits a uniform positive lower bound on $[0,T^\ast)$. By the ODE extension theorem, the solution $\mathbf{w}(t)$ can be extended to $[0,T_1)$ for some $T_1>T^\ast$, which contradicts the definition of $T^\ast$. Hence $T^\ast=+\infty$, and the flow admits a unique global solution on $[0,+\infty)$.

\medskip
\noindent\textit{Step 3: Long-time convergence of the solution}

Since $w_\tau^\prime(t)=\Theta_\tau^\alpha(t)\geq 0$ for all $t\geq0$, for each edge $\tau\in E$, $w_\tau(t)$ is non-decreasing in $t$. Thus, for every $\tau\in E$, either $w_\tau(t)\to+\infty$ or $w_\tau(t)\to w_\tau^\ast\geq w_{0,\tau}>0$ as $t\to+\infty$. We now prove that exactly one of the following two alternatives holds:
\begin{enumerate}
    \item[(i)] $w_\tau(t)\to+\infty$ for all $\tau\in E$;
    \item[(ii)] There exists $w_\tau^\ast>0$ such that $w_\tau(t)\to w_\tau^\ast$ for all $\tau\in E$.
\end{enumerate}

We proceed by contradiction. Suppose that neither (i) nor (ii) holds. Since $G$ is connected, there must exist two edges sharing a common vertex, say $\tau_1=xy$ and $\tau_2=xz$, such that
\begin{equation}\label{contrary-1}
w_{\tau_1}(t)\to w_{\tau_1}^\ast<+\infty, \quad \text{as } t\to+\infty,
\end{equation}
and
\begin{equation}\label{contrary-2}
w_{\tau_2}(t)\to +\infty, \quad \text{as } t\to+\infty.
\end{equation}
Recall that the local random walk starting from $x$ satisfies
$$
\mu_x^\alpha(y)=
\begin{cases}
(1-\alpha)\frac{w_{xy}}{\sum\limits_{u\sim x}w_{xu}}, & \text{if } \mathscr{N}_y\setminus\mathscr{N}_x=\varnothing,\\[1.5ex]
\alpha(1-\alpha)\frac{w_{xy}}{\sum\limits_{u\sim x}w_{xu}}, & \text{if } \mathscr{N}_y\setminus\mathscr{N}_x\neq\varnothing,
\end{cases}
$$
and $\mu_y^\alpha(y)=\alpha$ by definition. For any $t\geq0$, we have
\begin{align*}
\mu_y^\alpha(y)\log\frac{\mu_y^\alpha(y)}{\mu_x^\alpha(y)}
&\geq\alpha\log\frac{\alpha\sum_{u\sim x}w_{xu}(t)}{(1-\alpha)w_{xy}(t)}\\
&\geq\alpha\log\frac{\alpha}{1-\alpha}+\alpha\log\frac{w_{\tau_2}(t)}{w_{\tau_1}(t)}.
\end{align*}
Combined with the universal lower bound
$$\mu_y^\alpha(y)\log\frac{\mu_y^\alpha(y)}{\mu_x^\alpha(y)}\geq\mu_y^\alpha(y)\log\mu_y^\alpha(y)\geq -e^{-1},$$
we obtain
\begin{equation}\label{ggg}
\Theta_{\tau_1}^\alpha(t)\geq \alpha\log\frac{w_{\tau_2}(t)}{w_{\tau_1}(t)}-C
\end{equation}
for some constant $C>0$ depending only on $\alpha$ and $n$. From \eqref{contrary-1}, \eqref{contrary-2} and \eqref{ggg}, there exists $T>0$ such that
$$\Theta_{\tau_1}^\alpha(t)\geq 1,\quad\forall t\geq T.$$
As a consequence, we have
$$w_{\tau_1}(t)\geq w_{\tau_1}(T)+\int_T^t\Theta_{\tau_1}^\alpha(s)ds\geq t-T.$$
Letting $t\to+\infty$, we get $w_{\tau_1}(t)\to+\infty$, which contradicts \eqref{contrary-1}. Hence either (i) or (ii) must hold.

It remains to show that $\lim_{t\to+\infty}\Theta_e^\alpha(t)=0$ for all $e\in E$ in case (ii). For each edge $\tau$, since $w_\tau(t)\to w_\tau^\ast>0$, the edge entropy converges to $\Theta_\tau^\ast$, which is the entropy of $\tau$ with respect to the steady weight $\mathbf{w}^\ast=(w_\tau^\ast)_{\tau\in E}$. Suppose $\Theta_\tau^\ast\neq0$ for some $\tau$. Then $\Theta_\tau^\ast>0$ by \eqref{claim-1}, and by the mean value theorem,
$$w_\tau(t+1)-w_\tau(t)=w_\tau^\prime(\xi_t)=\Theta_\tau^\alpha(\xi_t)\to\Theta_\tau^\ast>0,\quad\text{as }t\to+\infty,$$
where $\xi_t\in[t,t+1]$. This contradicts the convergence of $w_\tau(t)$ to a finite limit $w_\tau^\ast$. Hence $\Theta_\tau^\ast=0$ for all $\tau\in E$, which completes the proof.
\hfill$\Box$

\medskip
\noindent\textbf{Proof of Theorem \ref{discrete-existence}}.
Since the discrete flow is an explicit forward iterative scheme, the existence and uniqueness of the solution to \eqref{discrete} hold trivially for all $j=0,1,2,\cdots$.

We now focus on the long-time convergence of the solution and the proof is similar to that of Theorem \ref{existence}. For each edge $\tau\in E$, since $w_\tau(t_{j+1})-w_\tau(t_j)=s\Theta_\tau^\alpha(t_j)\geq0$ for all $j\geq0$, the weight sequence $\{w_\tau(t_j)\}_{j\geq0}$ is non-decreasing. Therefore, for every $\tau\in E$, either $w_\tau(t_j)\to+\infty$ or $w_\tau(t_j)\to w_\tau^\ast\geq w_{0,\tau}>0$ as $j\to+\infty$. It remains to prove that exactly one of the following two alternatives holds:
\begin{equation}\label{fin}
w_\tau(t_j)\to w_\tau^\ast<+\infty\quad \text{for all } \tau\in E,
\end{equation}
or
\begin{equation}\label{infin}
w_\tau(t_j)\to +\infty\quad \text{for all } \tau\in E.
\end{equation}

Suppose neither \eqref{fin} nor \eqref{infin} holds. Since $G$ is connected, there must exist two edges sharing a common vertex, say $\tau_1=xy$ and $\tau_2=xz$, such that
$$w_{\tau_1}(t_j)\to w_{\tau_1}^\ast<+\infty, \quad \text{as } j\to+\infty,$$
and
$$w_{\tau_2}(t_j)\to +\infty, \quad \text{as } j\to+\infty.$$
Note that the lower bound inequality \eqref{ggg} derived in the proof of Theorem \ref{existence} relies only on the definition of local random walks and the positivity of edge weights, and thus holds equally for the discrete setting at every iteration $t_j$. That is,
$$\Theta_{\tau_1}^\alpha(t_j)\geq \alpha\log\frac{w_{\tau_2}(t_j)}{w_{\tau_1}(t_j)}-C$$
for some constant $C>0$ depending only on $\alpha$ and $n$. Since $w_{\tau_2}(t_j)\to+\infty$ and $w_{\tau_1}(t_j)\to w_{\tau_1}^\ast<+\infty$, there exists a positive integer $N$ such that
$$\Theta_{\tau_1}^\alpha(t_j)\geq 1,\quad \forall j\geq N.$$
For all $j>N$, substituting this into the discrete flow update rule yields
$$w_{\tau_1}(t_j)=w_{\tau_1}(t_N)+s\sum_{k=N}^{j-1}\Theta_{\tau_1}^\alpha(t_k)\geq s(j-N).$$
Letting $j\to+\infty$, we obtain $w_{\tau_1}(t_j)\to+\infty$, which contradicts the assumption $w_{\tau_1}(t_j)\to w_{\tau_1}^\ast<+\infty$. Hence either \eqref{fin} or \eqref{infin} must hold.

Finally, we prove that $\lim\limits_{j\to+\infty}\Theta_\tau^\alpha(t_j)=0$ for all $\tau\in E$ in the case of \eqref{fin}. For each edge $\tau$, if $w_\tau(t_j)\to w_\tau^\ast<+\infty$ as $j\to+\infty$, then the difference of the convergent sequence satisfies
$$\lim_{j\to+\infty}\left(w_\tau(t_{j+1})-w_\tau(t_j)\right)=0.$$
By the discrete flow update rule, $w_\tau(t_{j+1})-w_\tau(t_j)=s\Theta_\tau^\alpha(t_j)$ with $s>0$. It follows immediately that
$$\lim_{j\to+\infty}\Theta_\tau^\alpha(t_j)=0,$$
which completes the proof.
\hfill$\Box$

\medskip
\noindent\textbf{Proof of Theorem \ref{cohesion-existence}.}
The cohesion $C_e=\exp(\widetilde C_{xy})$ is a composition of rational functions and the exponential function, hence continuously differentiable on $\mathbb{R}_+^m$. Consequently, the vector field of the ODE system \eqref{cohesion-flow} is local Lipschitz  on $\mathbb{R}_+^m$. By the Picard-Lindel\"of theorem, the cohesion flow \eqref{cohesion-flow} admits a unique solution on $[0,T^\ast)$.

Since $-1\le \widetilde C_{xy}\le 1$, we have $e^{-1}\le C_e\le e$, and hence
$$e^{-1}\le w_e^\prime(t)\le e,\qquad \forall e\in E.$$
Integrating from $0$ to $t$ yields
$$w_{0,e}+e^{-1}t\le w_e(t)\le w_{0,e}+et,\qquad \forall t\ge0.$$
By the ODE extension theorem, the solution can be extended  $T^\ast=+\infty$. The lower bound further implies that each $w_e(t)$ is strictly increasing and $w_e(t)\to+\infty$. This completes the proof. \hfill$\Box$

\medskip
\noindent\textbf{Proof of Theorem \ref{discrete-cohesion-existence}.}
The discrete cohesion flow is defined by the explicit recurrence
\[
w_e(t_{j+1})=w_e(t_j)+sC_e(t_j),\qquad j=0,1,2,\cdots
\]
Given the initial weights $w_e(0)=w_{0,e}>0$, the right-hand side is uniquely determined at each step, so the sequence $\{w_e(t_j)\}_{j\ge0}$ exists and is unique for all $j$.
Since $e^{-1}\le C_e(t_j)\le e$ for every edge $e$, it follows that
\[
se^{-1}\le w_e(t_{j+1})-w_e(t_j)\le se.
\]
Summing these inequalities from $k=0$ to $j-1$ yields
\[
w_{0,e}+je^{-1}s\;\le\;w_e(t_j)\;\le\;w_{0,e}+jes,\qquad j\ge0.
\]
Thus, $\{w_e(t_j)\}_{j\ge0}$ is strictly increasing for all $j$, and the lower bound implies $w_e(t_j)\to+\infty$ as $j\to\infty$, completing the proof. \hfill$\Box$

\section{Applications and numerical experiments}\label{experimental}

In this section, we evaluate the proposed local entropy flow (LEF) and cohesion flow (CF) on community detection and node classification. We denote the global entropy flow from our previous work \cite{entropyflow} as GEF, and the combination of CF and LEF as CLEF. The proposed graph evolution mechanisms are implemented through the discrete local entropy flow \eqref{discrete} and the discrete cohesion flow \eqref{discrete-cohesion-flow}, which serve as numerical realizations of the continuous flows \eqref{entropy-flow} and \eqref{cohesion-flow}.
Theorems \ref{discrete-existence} and \ref{discrete-cohesion-existence} guarantee that these iterative schemes admit unique solutions for all iterations.

For community detection, we first examine three strategies: LEF alone, CF alone, and their sequential combination CLEF.
We then compare CLEF with several classical  algorithms and Ricci curvature-based methods.
In addition, to evaluate the role of local entropy in the two-stage process, 
we consider CGEF (CF followed by GEF) as a comparison baseline.
For node classification, we adopt the GEGCN architecture \cite{GEGCN} and replace its Ricci flow-based graph evolution with our entropy-based mechanisms. We evaluate LEF and CLEF against a range of baseline methods on both homophilic and heterophilic datasets. Results for GEF and CGEF are also reported to compare local and global entropy evolution.

Overall, LEF and CF are both effective as  mechanisms for graph evolution. CLEF yields more significant improvements for community detection, whereas the differences among entropy-based variants are relatively minor for node classification.

\subsection{Community detection}\label{sec:community-detection}

Community detection aims to partition a graph into densely connected groups with sparse inter-group connections, with broad applications across disciplines \cite{Bhowmick, GN, Tauro}. Classical approaches include modularity optimization, spectral clustering, and probabilistic graphical models \cite{Newman, Fortunato, GN, Newman2006}. Recently, discrete Ricci curvature and Ricci flow have emerged as effective geometric tools for graph analysis \cite{LaiX, Ma1, NLLG, Tian-Ma-Yang-Zhao}, and our previous work introduced GEF as an information-theoretic alternative \cite{entropyflow}.

\subsubsection{Algorithm and experimental setup}
We evaluate three strategies for community detection: (i) LEF alone, (ii) CF alone, and (iii) CLEF (CF + LEF). All strategies follow the same general procedure: evolve edge weights according to the corresponding discrete flow, then remove edges with large weights and treat the remaining connected components as communities.
The CLEF pipeline is outlined in Algorithm~\ref{alg:community-detection}, the other strategies follow the same edge evolution and removal procedure using their respective flow rules.

\begin{algorithm}[h]
\caption{Community detection via CLEF (CF + LEF)}
\label{alg:community-detection}
\KwIn{Connected graph $G=(V, E, \mathbf{w}_0)$; parameters $\alpha \in (0,1)$; step sizes $s_c, s_e > 0$; number of cohesion iterations  $N_c$; number of entropy iterations $N_e$.}
\KwOut{Community partition of $G$.}

\textbf{Step 1: Cohesion-driven edge evolution}

\For{$i = 1,2,\dots,N_c$}{
    Compute edge cohesion $C_e(t_{i-1})$ for all $e \in E$\;
    Update edge weights:
    $w_e(t_i) \leftarrow w_e(t_{i-1}) + s_c \cdot C_e(t_{i-1})$\;
}

\textbf{Step 2: Entropy-driven edge evolution}

\For{$j = 1,2,\dots,N_e$}{
    Construct local random walk distributions for all nodes in $G$\;
    Compute edge entropy $\Theta_e(t_{j-1})$ for all $e \in E$\;
    Update edge weights:
    $w_e(t_j) \leftarrow w_e(t_{j-1}) + s_e \cdot \Theta_e(t_{j-1})$\;
}

\textbf{Step 3: Community extraction}

\For{$\theta = \max\limits_{e \in E} w_e,\ \ldots,\ \min\limits_{e \in E}$}{
    Remove all edges with $w_e \geq \theta$ from $E$\;
    Extract connected components as candidate communities\;
    Evaluate the quality of the current partition\;
}

\Return the partition with the best quality score.
\end{algorithm}

We evaluate all methods on three widely used real-world benchmark networks: Karate \cite{karate}, Football \cite{GN}, and Facebook \cite{facebook}. We adopt three evaluation metrics: Adjusted Rand Index (ARI), Normalized Mutual Information (NMI), and Modularity (Q) \cite{NMI, ARI, Q}.

We compare our methods with 8 representative baseline approaches:
\begin{itemize}
    \item Classical methods: Girvan-Newman \cite{GN}, greedy modularity maximization \cite{Newman, Reichardt}, and label propagation \cite{Cordasco};
    \item Ricci curvature-based methods: DORF \cite{NLLG}, NDORF, NDSRF \cite{LaiX}, and two variants Rho and RhoN \cite{Ma1}.
\end{itemize}

Hyperparameters are selected via empirical analysis. For all strategies, we fix the step size $s=0.1$ and set $\alpha=0.5$ for the entropy-based flows. 
For CF, we set the number of iterations to $N_c=30$ for Karate, $N_c=5$ for Football, and $N_c=30$ for Facebook. For LEF, we set $N_e=10$ uniformly across all three datasets.
For CLEF, we set $N_c=30$ and $N_e=40$ on Karate, $N_c=30$ and $N_e=10$ on Football, and $N_c=30$ and $N_e=10$ on Facebook. For CGEF,  $N_g$ denotes the number of iterations of the global entropy flow.
We set $N_c=10$ and $N_g=20$ on Karate, $N_c=10$ and $N_g=10$ on Football, and $N_c=20$ and $N_g=11$ on Facebook.

\subsubsection{Experimental results and analysis}

We first examine the performance of the three proposed strategies. As shown in Table~\ref{tab:community_proposed}, both LEF and CF are effective on certain datasets, but their performance is not consistent across all cases. Specifically, LEF performs reasonably well on Karate (ARI=0.48) and achieves strong modularity on Facebook (Q=0.93), but it performs poorly on Football (ARI 0.03). In contrast, CF achieves strong results on Football (ARI=0.93, NMI=0.94), but its performance is weaker on Karate (ARI=0.16) and Facebook (ARI=0.67). These results indicate that each flow captures different structural characteristics of the network, but neither provides uniformly strong performance across all datasets.

In contrast, the two-stage combination CLEF achieves consistently strong performance across all datasets. On Karate, it attains the best ARI (0.83) and NMI (0.78). On Football, it matches the best-performing methods (ARI=0.93, NMI=0.94). On Facebook, it achieves the highest ARI (0.72) and competitive modularity. These results confirm that the sequential combination of CF and LEF effectively integrates their complementary strengths, yielding robust community detection performance.

\begin{table}[htbp]
\centering
\caption{Performance of the three proposed strategies}
\label{tab:community_proposed}
\small
\begin{tabular}{lccc|ccc|ccc}
\toprule
 & \multicolumn{3}{c|}{Karate} & \multicolumn{3}{c|}{Football} & \multicolumn{3}{c}{Facebook} \\
\cmidrule(lr){2-4} \cmidrule(lr){5-7} \cmidrule(lr){8-10}
Methods & ARI & NMI & Q & ARI & NMI & Q & ARI & NMI & Q \\
\midrule
LEF & 0.48 & 0.55 & \first{0.82} & 0.03 & 0.68 & 0.50 & 0.04 & 0.51 & \first{0.93} \\
CF & 0.16 & 0.39 & 0.70 & \first{0.93} & \first{0.94} & 0.91 & 0.67 & \first{0.70} & 0.86 \\
CLEF & \first{0.83} & \first{0.78} & 0.80 & \first{0.93} & \first{0.94} & \first{0.92} & \first{0.72} & \first{0.70} & \first{0.93} \\
\bottomrule
\end{tabular}
\end{table}

We further compare CLEF with 8 baseline methods. Table~\ref{tab:community_full} reports the performance of all methods on the three datasets. On Karate, CLEF achieves an ARI of 0.83 and an NMI of 0.78, which are higher than those obtained by all baseline methods. Its modularity is 0.80, lower than RhoN (0.84) but comparable to several other methods. On Football, CLEF achieves an ARI of 0.93, an NMI of 0.94, and a modularity of 0.92, matching the best results among the baseline methods. On Facebook, CLEF achieves the best ARI (0.72) among all methods, while its NMI (0.70) and modularity (0.93) are slightly lower than those of RhoN (0.72 and 0.95, respectively). Overall, these results suggest that the proposed two-stage framework performs competitively across the three benchmark networks.

\begin{table}[H]
\centering
\caption{Comparison of CLEF with baseline methods}
\label{tab:community_full}
\small
\begin{tabular}{lccc|ccc|ccc}
\toprule
 & \multicolumn{3}{c|}{Karate} & \multicolumn{3}{c|}{Football} & \multicolumn{3}{c}{Facebook} \\
\cmidrule(lr){2-4} \cmidrule(lr){5-7} \cmidrule(lr){8-10}
Methods & ARI & NMI & Q & ARI & NMI & Q & ARI & NMI & Q \\
\midrule
Girvan-Newman & \second{0.77} & \second{0.73} & 0.48 & 0.14 & 0.36 & 0.50 & 0.03 & 0.16 & 0.01 \\
Greedy Modularity & 0.57 & 0.56 & 0.58 & 0.47 & 0.70 & 0.82 & 0.49 & 0.68 & 0.55 \\
Label Propagation & 0.38 & 0.36 & 0.54 & 0.75 & 0.87 & 0.90 & 0.39 & 0.65 & 0.51 \\
DORF & 0.59 & 0.57 & 0.69 & \first{0.93} & \first{0.94} & \second{0.91} & 0.67 & \first{0.73} & 0.68 \\
NDORF & 0.59 & 0.57 & 0.69 & \first{0.93} & \first{0.94} & \second{0.91} & 0.68 & \first{0.73} & 0.68 \\
NDSRF & 0.59 & 0.57 & 0.68 & \first{0.93} & \first{0.94} & \second{0.91} & 0.68 & \first{0.73} & 0.68 \\
Rho & \second{0.77} & 0.68 & \second{0.82} & \second{0.89} & 0.92 & 0.90 & 0.64 & \second{0.72} & 0.63 \\
RhoN & \second{0.77} & 0.68 & \first{0.84} & \second{0.89} & \second{0.93} & \first{0.92} & \second{0.69} & \second{0.72} & \second{0.95} \\
\midrule
CLEF & \first{0.83} & \first{0.78} & 0.80 & \first{0.93} & \first{0.94} & \first{0.92} & \first{0.72} & 0.70 & 0.93 \\
\bottomrule
\end{tabular}
\end{table}

In addition, we report the performance of CGEF (CF followed by GEF) as a supplementary comparison. Table~\ref{tab:clef_vs_cgef} shows that CLEF and CGEF achieve nearly identical results across all three datasets, indicating that the choice between local and global entropy flow in the second stage has minimal impact on performance.

\begin{table}[htbp]
\centering
\caption{Comparison of CLEF and CGEF}
\label{tab:clef_vs_cgef}
\small
\begin{tabular}{lccc|ccc|ccc}
\toprule
 & \multicolumn{3}{c|}{Karate} & \multicolumn{3}{c|}{Football} & \multicolumn{3}{c}{Facebook} \\
\cmidrule(lr){2-4} \cmidrule(lr){5-7} \cmidrule(lr){8-10}
Methods & ARI & NMI & Q & ARI & NMI & Q & ARI & NMI & Q \\
\midrule
CLEF & 0.83 & 0.78 & 0.80 & 0.93 & 0.94 & 0.92 & 0.72 & 0.70 & 0.93 \\
CGEF & 0.83 & 0.78 & 0.81 & 0.93 & 0.94 & 0.92 & 0.72 & 0.71 & 0.94 \\
\bottomrule
\end{tabular}
\end{table}

To further illustrate the results, we visualize the community partitions obtained by CLEF on the three benchmark networks in Figure \ref{fig:entropy_flow_visualization}. The proposed method successfully identifies community structures on all three networks. Densely connected regions are assigned to the same community, while inter-community bridge edges are effectively removed during the flow process, resulting in clear community partitions.

\begin{figure}[H]
    \centering
    \begin{subfigure}{0.28\textwidth}
        \centering
        \includegraphics[width=\textwidth]{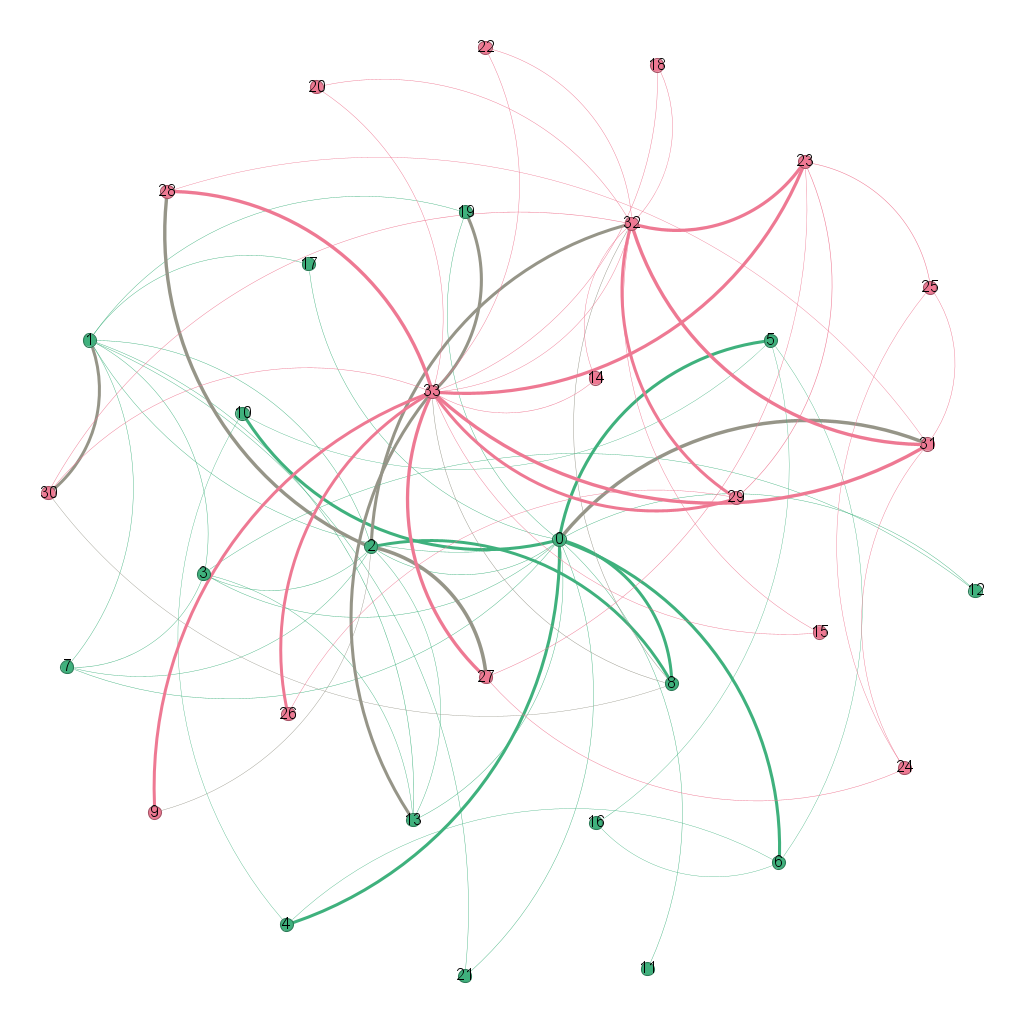}
        \caption{Karate (Original)}
    \end{subfigure}
    \hfill
    \begin{subfigure}{0.28\textwidth}
        \centering
        \includegraphics[width=\textwidth]{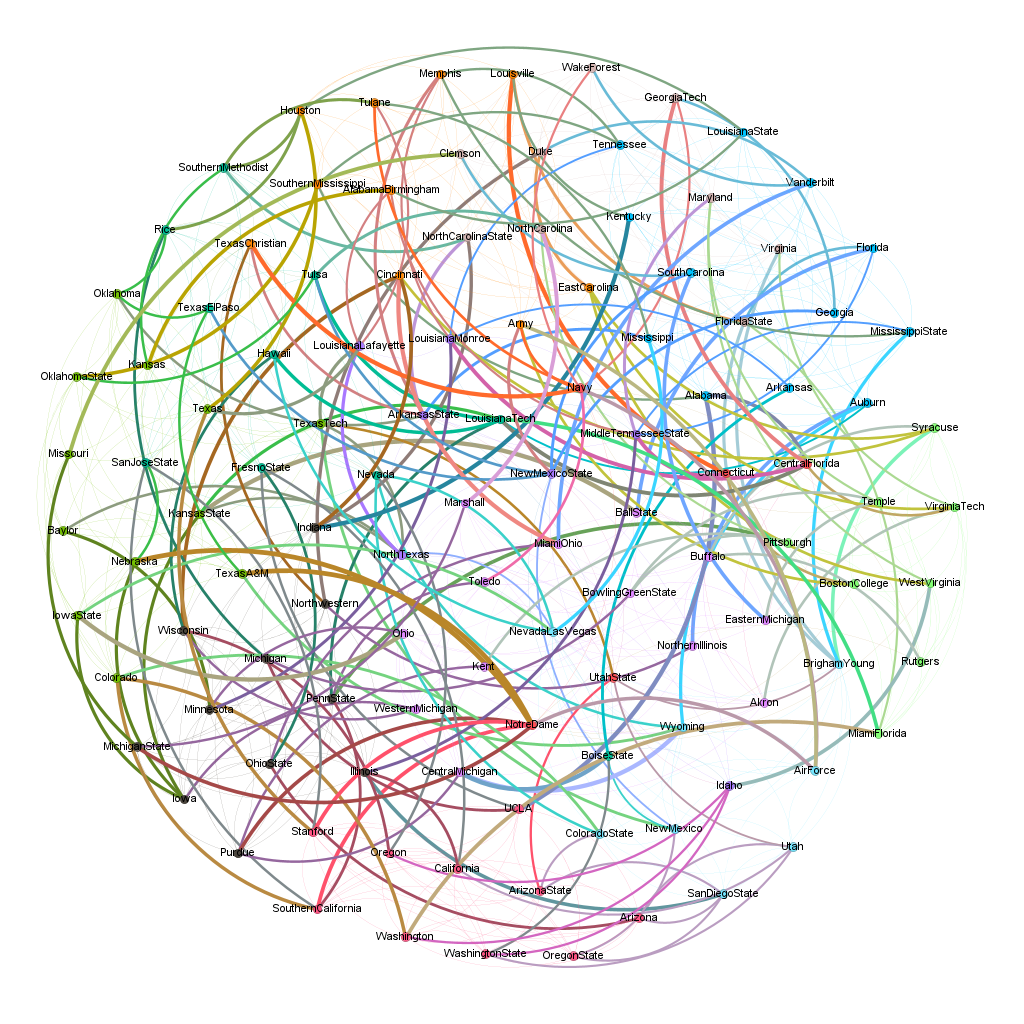}
        \caption{Football (Original)}
    \end{subfigure}
    \hfill
    \begin{subfigure}{0.28\textwidth}
        \centering
        \includegraphics[width=\textwidth]{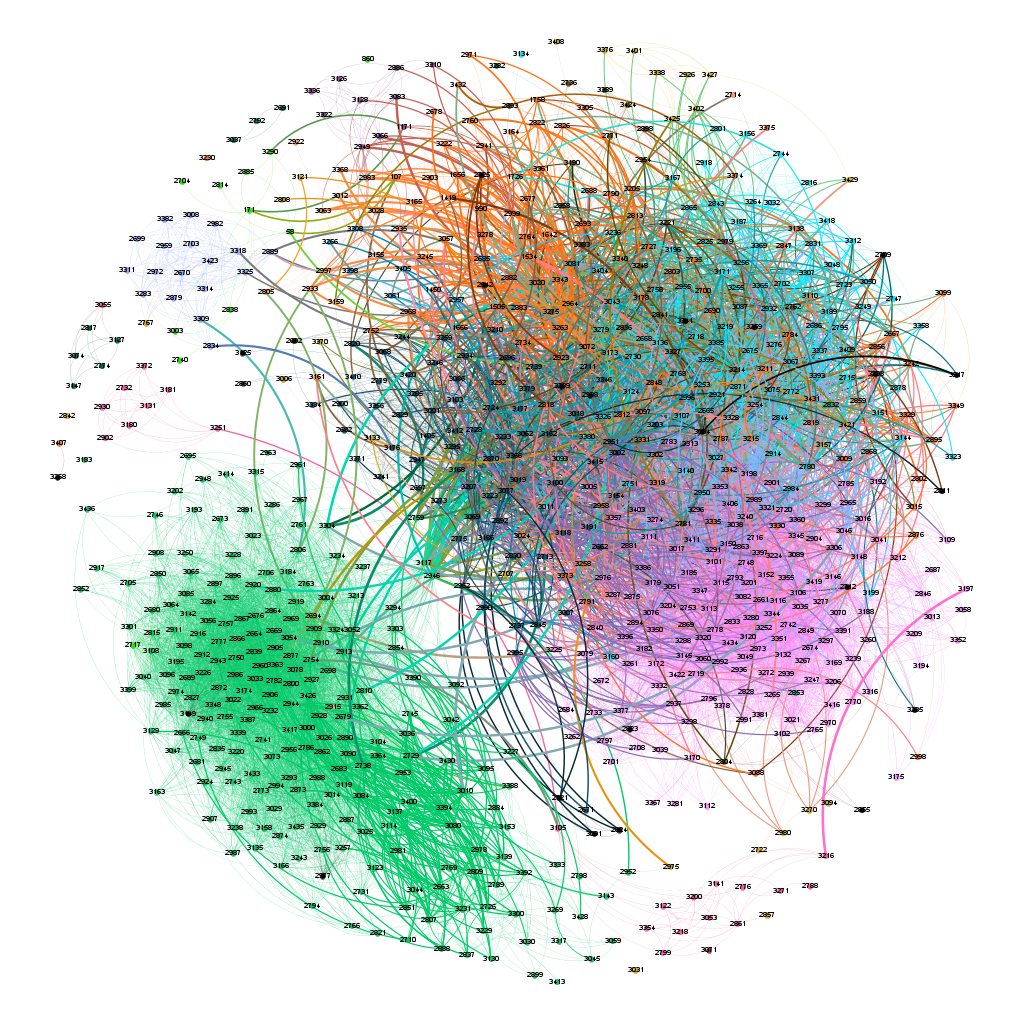}
        \caption{Facebook (Original)}
    \end{subfigure}

    \vspace{0.3cm}

    \begin{subfigure}{0.28\textwidth}
        \centering
        \includegraphics[width=\textwidth]{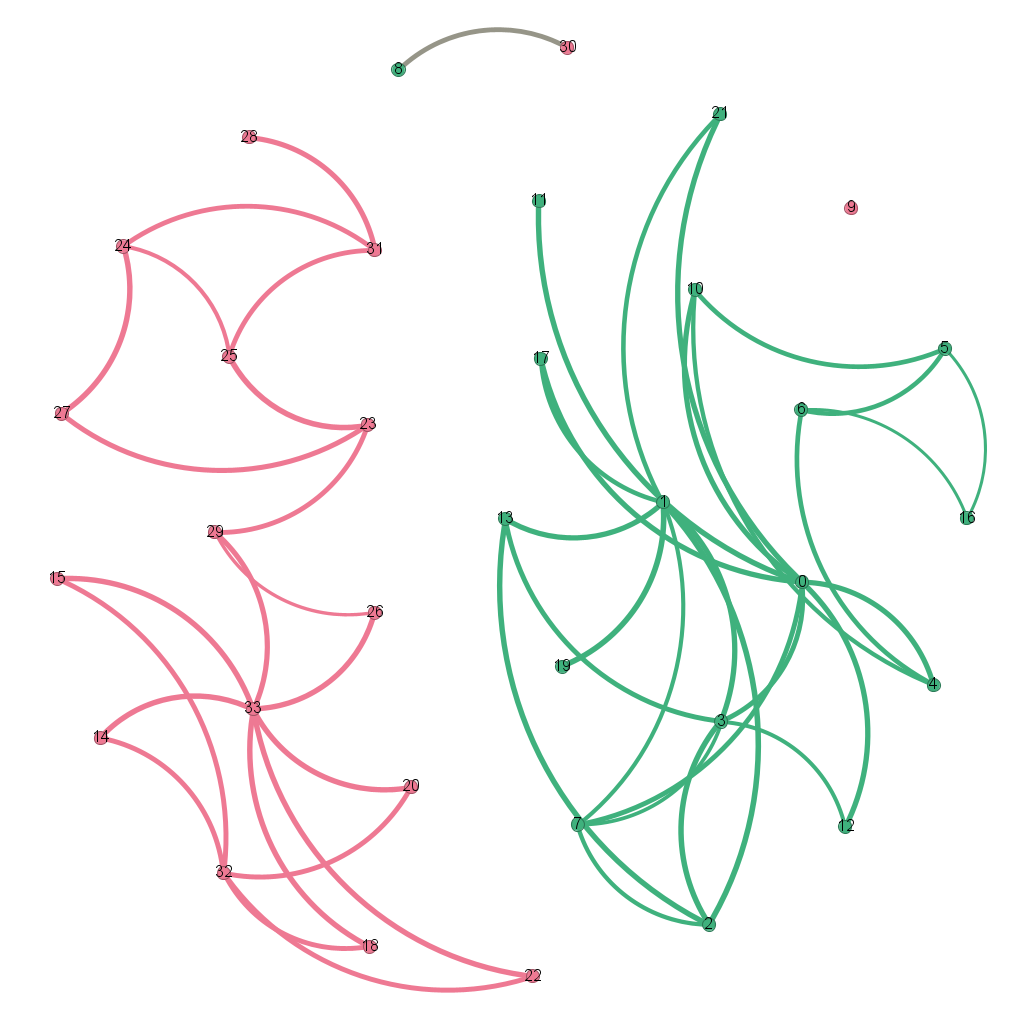}
        \caption{Karate (Detected communities)}
    \end{subfigure}
    \hfill
    \begin{subfigure}{0.28\textwidth}
        \centering
        \includegraphics[width=\textwidth]{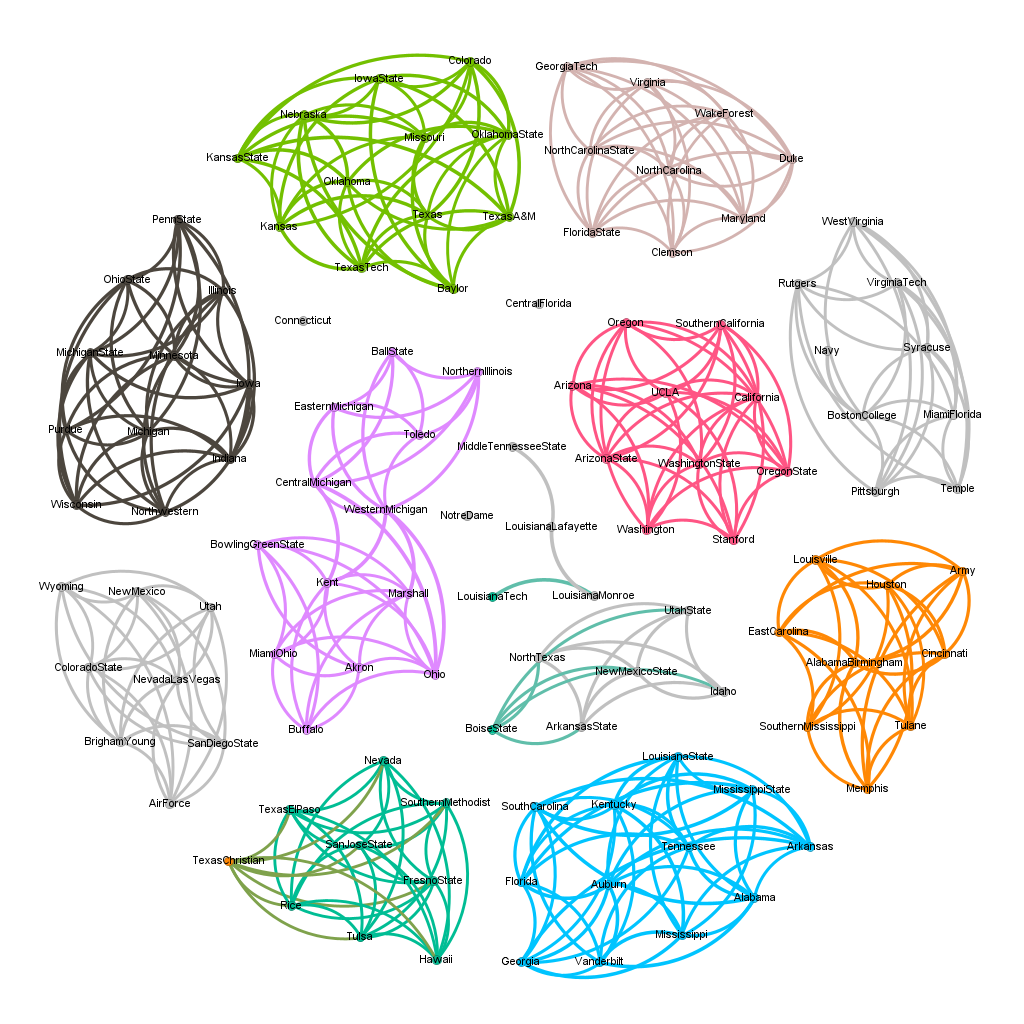}
        \caption{Football (Detected communities)}
    \end{subfigure}
    \hfill
    \begin{subfigure}{0.28\textwidth}
        \centering
        \includegraphics[width=\textwidth]{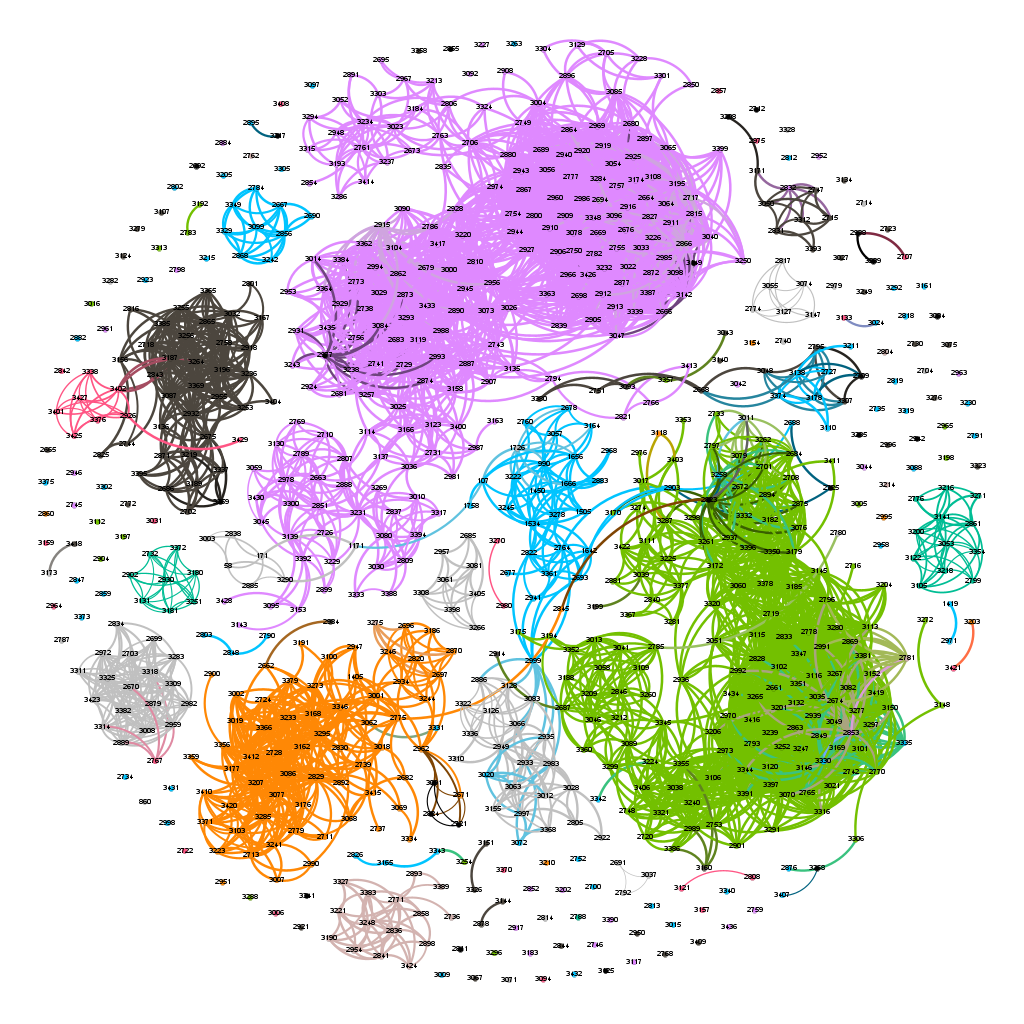}
        \caption{Facebook (Detected communities)}
    \end{subfigure}

    \caption{Visualization of community detection results}
    \label{fig:entropy_flow_visualization}
\end{figure}

\subsection{Node classification}

Node classification is another core task in graph learning, which aims to predict labels for unlabeled nodes based on both node features and the underlying graph structure. Graph neural networks (GNNs) have achieved remarkable success on this task by iteratively aggregating information from local neighborhoods \cite{WLHamilton, Kipf, Velickovic}. Recently, geometric graph learning methods, especially curvature-based approaches, have attracted increasing attention for their ability to characterize complex structural properties of graphs and improve representation learning \cite{Shi, Ye}. Among them, GEGCN \cite{GEGCN} uses discrete Ollivier Ricci flow to model graph evolution, and achieves excellent performance on both homophilic and heterophilic node classification tasks.

\subsubsection{Algorithm and experimental setup}

To evaluate the effectiveness of the proposed flows on node classification, we adopt the GEGCN framework \cite{GEGCN} as our backbone architecture, replacing its Ricci flow-based graph evolution with our entropy-driven mechanisms. Specifically, we consider two strategies derived from our proposed flows: LEF alone and CLEF (CF+LEF). For each strategy, we first evolve edge weights over multiple iterations to generate temporal sequences of weights and entropy. Following the GEGCN pipeline, these sequences are encoded by an LSTM to capture structural dynamics, producing edge importance scores that are then incorporated into a graph convolutional network for label prediction. Our primary aim is to investigate whether the temporal sequences generated by the proposed flows can effectively guide graph evolution and improve node representation learning, while retaining the full GEGCN architecture for fair comparison.

For clarity, we present the complete procedure for CLEF in Algorithm~\ref{alg:ef-gegcn}; LEF follows the same pipeline, with the only difference being the absence of the cohesion flow step.

\begin{algorithm}[htbp]
\caption{Node classification via CLEF (CF+LEF)}
\label{alg:ef-gegcn}
\KwIn{Graph $G=(V,E, \mathbf{w}_0)$ with node features $X$; parameters $\alpha \in (0,1)$; step sizes $s_c, s_e > 0$; number of cohesion iterations $N_c$; number of entropy iterations $N_e$.}
\KwOut{Predicted node labels $\hat{Y}$.}

\textbf{Step 1: Cohesion-driven graph evolution}

\For{$i = 0,1,\dots,N_c-1$}{
    Compute edge cohesion $C_e(t_i)$ for all $e \in E$\;
    Update edge weights $w_e(t_i+1) \leftarrow w_e(t_i) + s_c \cdot C_e(t_i)$\;
}

\textbf{Step 2: Entropy-driven graph evolution}

\For{$j = 0,1,\dots,N_e-1$}{
    Compute edge entropy $\Theta_e(t_j)$ for all $e \in E$\;
    Update edge weights $w_e(t_j+1) \leftarrow w_e(t_j) + s_e \cdot \Theta_e(t_j)$\;
}
Obtain edge weight sequences $\{w_e(t)\}_{t=0}^{N_c+N_e}$ and entropy sequences $\{\Theta_e(t)\}_{t=0}^{N_c+N_e}$\;

\textbf{Step 3: Structural dynamics encoding}

Encode the joint sequence $\{[\Theta_e(t), w_e(t)]\}_{t=0}^{N_c+N_e}$ using an LSTM to obtain edge representations\;
Compute edge importance scores $a_e^*$ from the edge representations\;
Construct the weighted adjacency matrix $\tilde{A}^*$ and normalize it to $\hat{A}^*$\;

\textbf{Step 4: Entropy-aware graph convolution}

Initialize node representations $H^{(0)} = X$\;
\For{$\ell = 0,1,\dots,L-1$}{
    $H^{(\ell+1)} = \sigma(\hat{A}^* H^{(\ell)} W^{(\ell)})$\;
}
Compute predicted labels $\hat{Y} = \mathrm{softmax}(H^{(L)})$\;

\Return $\hat{Y}$.
\end{algorithm}

We evaluate the proposed framework on 10 benchmark datasets covering both homophilic and heterophilic graph structures.
\begin{itemize}
    \item Homophilic datasets: Cora, Citeseer, Pubmed \cite{Sen}, Coauthor CS, Amazon Photos \cite{McAuley}. Following the experimental protocol in \cite{Shi}, we partition the nodes into 20\% for training, 10\% for validation, and 70\% for test.
    \item Heterophilic datasets: Cornell, Texas, Wisconsin (WebKB collection) \cite{Craven}, Chameleon \cite{Rozemberczki}, Actor \cite{Tang}. We follow the split 60\%/20\%/20\% for training, validation, and testing, respectively \cite{Topping}. And the test set is always fixed rather than randomly sampled, and the remaining nodes are then randomly divided into training and validation sets. This is to ensure that the test set is not used for any training or hyperparameter tuning prior to final evaluation.
\end{itemize}
Detailed statistics of the datasets are summarized in Tables \ref{tab:homophilic} and \ref{tab:heterophilic}.

\begin{table}[H]
\centering
\caption{Statistics of homophilic datasets}
\label{tab:homophilic}
\small
\begin{tabular}{|l|c|c|c|c|c|}
\hline
\textbf{Dataset} & \textbf{Nodes} & \textbf{Edges} & \textbf{Features} & \textbf{Classes} & \textbf{Homophily (H(G))} \\
\hline
Cora & 2708 & 5278 & 1433 & 7 & 0.83 \\
Citeseer & 3327 & 4552 & 3703 & 6 & 0.72 \\
Pubmed & 19717 & 44324 & 500 & 3 & 0.79 \\
Coauthor CS & 18333 & 81894 & 6805 & 15 & 0.83 \\
Amazon Photos & 7487 & 119043 & 745 & 8 & 0.85 \\
\hline
\end{tabular}
\end{table}

\begin{table}[thbp]
\centering
\caption{Statistics of heterophilic datasets}
\label{tab:heterophilic}
\small
\begin{tabular}{|l|c|c|c|c|c|}
\hline
\textbf{Dataset} & \textbf{Nodes} & \textbf{Edges} & \textbf{Features} & \textbf{Classes} & \textbf{Homophily (H(G))} \\
\hline
Cornell & 183 & 280 & 1703 & 5 & 0.30 \\
Texas & 183 & 295 & 1703 & 5 & 0.11 \\
Wisconsin & 251 & 466 & 1703 & 5 & 0.21 \\
Chameleon & 2277 & 31421 & 2325 & 5 & 0.23 \\
Actor & 7600 & 26752 & 932 & 5 & 0.24 \\
\hline
\end{tabular}
\end{table}

We compare LEF and CLEF with a comprehensive collection of baseline methods. For homophilic datasets, the compared methods cover classical graph neural networks including MLP, GCN \cite{Kipf}, GAT \cite{Velickovic}, and GraphSAGE \cite{WLHamilton}, as well as models with enhanced information propagation capabilities, such as JKNet \cite{Xu}, APPNP \cite{Gasteiger}, GPRGNN \cite{Chien}, MoNet \cite{Monti}, and UFGConv \cite{Zheng}. We also incorporate geometry-based approaches, namely CurvGN \cite{Ye}, RC-UFG \cite{Shi}, and GEGCN.
For heterophilic benchmarks, we further adopt methods specifically designed to handle non-homophilous graph structures, including diffusion-based models such as DIGL \cite{Gasteiger2019b} and fully-adjacent augmentation (+FA) \cite{Alon}, as well as graph rewiring techniques exemplified by SDRF \cite{Topping}. Moreover, the global entropy flow (GEF) is included for evaluation under both homophilic and heterophilic settings, enabling a direct and fair performance comparison between the proposed local formulation and its global counterpart.

For LEF, we fix the step size $s=0.1$,  $\alpha=0.5$, and $N_e=10$ on all datasets.
For CLEF, we first apply CF with step size $s_c=0.1$ for $N_c=10$ iterations, then apply LEF with $s_e=0.01$ and $\alpha=0.5$ for $N_e=10$ iterations. For completeness, we also evaluate GEF with the same settings as LEF, and CGEF with the same two-stage settings as CLEF.
All results are averaged over 10 independent runs. Hyperparameters are selected via random search based on validation performance.

\subsubsection{Experimental results and analysis}

We first compare LEF and CLEF with the baseline methods. Table~\ref{tab:node_classification} reports the node classification accuracy on homophilic datasets.
On Cora, LEF achieves 86.6\%, slightly below GEGCN (86.7\%), and CLEF achieves 85.8\%. On Citeseer, LEF and CLEF achieve 74.4\% and 74.0\%, respectively, lower than GEGCN (76.6\%) and APPNP/GPRGNN (75.9\%). On Pubmed, both LEF (88.3\%) and CLEF (88.5\%) outperform GEGCN (87.4\%). On Coauthor CS, LEF and CLEF both achieve the highest accuracy (95.2\%). On Amazon Photos, LEF (94.5\%) and CLEF (94.6\%) outperform GEGCN (94.1\%).

Overall, both LEF and CLEF perform competitively against the baseline methods, with CLEF achieving slightly higher accuracy than LEF on Pubmed and Amazon Photos, while they tie on Coauthor CS.

\begin{table}[H]
\centering
\caption{Node classification accuracy (\%) on homophilic datasets (mean $\pm$ std over 10 runs)}
\label{tab:node_classification}
\small
\begin{tabular}{lccccc}
\hline
Method & Cora & Citeseer & Pubmed & Coauthor CS & Amazon Photos \\
\hline
MLP & 55.1$\pm$1.4 & 59.1$\pm$1.2 & 71.4$\pm$0.8 & 88.3$\pm$0.7 & 69.6$\pm$3.8 \\
MoNet & 81.7$\pm$0.4 & 71.2$\pm$0.7 & 78.6$\pm$0.5 & 90.8$\pm$0.6 & 91.2$\pm$1.3 \\
GCN & 81.5$\pm$0.5 & 70.9$\pm$0.5 & 79.0$\pm$0.3 & 91.1$\pm$0.5 & 91.2$\pm$1.2 \\
GraphSAGE & 79.2$\pm$7.7 & 71.6$\pm$1.9 & 77.4$\pm$2.2 & 91.3$\pm$2.8 & 91.4$\pm$1.4 \\
GAT & 83.0$\pm$0.7 & 72.5$\pm$0.7 & 79.0$\pm$0.3 & 90.5$\pm$0.6 & 85.1$\pm$2.3 \\
JKNet & 83.7$\pm$0.7 & 72.5$\pm$0.4 & 82.6$\pm$0.5 & 91.1$\pm$0.3 & 86.1$\pm$1.1 \\
APPNP & 83.5$\pm$0.7 & \second{75.9$\pm$0.6} & 80.2$\pm$0.3 & 91.5$\pm$0.1 & 87.0$\pm$0.9 \\
GPRGNN & 83.8$\pm$0.9 & \second{75.9$\pm$0.7} & 82.3$\pm$0.2 & 91.8$\pm$0.1 & 87.0$\pm$0.9 \\
CurvGN & 82.6$\pm$0.6 & 71.5$\pm$0.8 & 78.8$\pm$0.6 & 92.9$\pm$0.4 & 92.5$\pm$0.5 \\
UFGConv$\_$S & 83.0$\pm$0.5 & 71.0$\pm$0.6 & 79.4$\pm$0.4 & 92.1$\pm$0.2 & 92.1$\pm$0.5 \\
UFGConv$\_$R & 83.6$\pm$0.6 & 72.7$\pm$0.6 & 79.6$\pm$0.4 & 93.0$\pm$0.7 & 92.5$\pm$0.2 \\
RC-UFG (Hom) & 84.4$\pm$0.7 & 72.5$\pm$0.7 & 82.9$\pm$0.2 & 94.2$\pm$0.9 & 93.5$\pm$0.7 \\
RC-UFG (Het) & 80.6$\pm$0.4 & 71.7$\pm$0.6 & 79.6$\pm$0.4 & 90.4$\pm$1.2 & 89.5$\pm$1.9 \\
GEGCN & \first{86.7$\pm$1.2} & \first{76.6$\pm$1.1} & 87.4$\pm$0.4 & 93.2$\pm$0.4 & 94.1$\pm$0.5 \\
\midrule
LEF & \second{86.6$\pm$0.2} & 74.4$\pm$0.3 & \second{88.3$\pm$0.2} & \first{95.2$\pm$0.1} & \second{94.5$\pm$0.2} \\
CLEF & 85.8$\pm$0.3 & 74.0$\pm$0.3 & \first{88.5$\pm$0.1} & \first{95.2$\pm$0.0} & \first{94.6$\pm$0.2} \\
\hline
\end{tabular}
\end{table}

Table~\ref{tab:table2} reports the results on heterophilic datasets. On Cornell, LEF achieves 75.14\% and CLEF achieves 75.68\%, both substantially higher than GEGCN (68.61\%). On Texas, LEF achieves 78.38\% and CLEF achieves 79.46\%, compared with GEGCN (70.27\%). On Wisconsin, LEF achieves 81.96\% and CLEF achieves 83.14\%, compared with GEGCN (67.39\%). On Chameleon, LEF achieves 66.73\%, higher than GEGCN (60.72\%), while CLEF achieves 64.93\%. On Actor, LEF (35.31\%) and CLEF (35.51\%) are slightly lower than GEGCN (37.18\%). Overall, both LEF and CLEF outperform GEGCN on four out of five datasets, with CLEF achieving higher accuracy than LEF on Cornell, Texas, and Wisconsin, while LEF performs better on Chameleon.

\begin{table}[h]
\centering
\caption{Node classification accuracy (\%) on heterophilic datasets (mean $\pm$ std over 10 runs)}
\label{tab:table2}
\small
\begin{tabular}{lccccc}
\hline
Method & Cornell & Texas & Wisconsin & Chameleon & Actor \\
\hline
GCN & 52.69$\pm$0.21 & 61.19$\pm$0.49 & 54.60$\pm$0.86 & 41.33$\pm$0.18 & 23.84$\pm$0.43 \\
Undirected & 53.20$\pm$0.53 & 63.38$\pm$0.87 & 51.37$\pm$1.15 & 42.02$\pm$0.30 & 21.45$\pm$0.47 \\
+FA & 58.29$\pm$0.49 & 64.82$\pm$0.29 & 55.48$\pm$0.62 & 42.67$\pm$0.17 & 24.14$\pm$0.43 \\
DIGL & 58.26$\pm$0.50 & 62.03$\pm$0.43 & 49.53$\pm$0.27 & 42.02$\pm$0.13 & 24.77$\pm$0.32 \\
DIGL+Undirected & 59.54$\pm$0.64 & 63.54$\pm$0.38 & 52.23$\pm$0.54 & 42.68$\pm$0.12 & 25.45$\pm$0.30 \\
SDRF & 54.60$\pm$0.39 & 64.46$\pm$0.38 & 55.51$\pm$0.27 & 42.73$\pm$0.15 & 28.42$\pm$0.75 \\
SDRF+Undirected & 57.54$\pm$0.34 & 67.02$\pm$0.40 & 56.55$\pm$0.86 & 44.46$\pm$0.17 & 28.35$\pm$0.06 \\
GEGCN & 68.61$\pm$0.26 & 70.27$\pm$0.69 & 67.39$\pm$0.93 & 60.72$\pm$0.16 & \first{37.18$\pm$0.30} \\
\midrule
LEF & \second{75.14$\pm$2.13} & \second{78.38}$\pm$9.70 & \second{81.96}$\pm$7.67 & \first{66.73$\pm$0.82} & 35.31$\pm$0.52 \\
CLEF & \first{75.68$\pm$2.85} & \first{79.46$\pm$3.86} & \first{83.14$\pm$3.36} & \second{64.93}$\pm$4.08 & \second{35.51}$\pm$0.01 \\
\hline
\end{tabular}
\end{table}

As a supplementary comparison, we also examine the performance of GEF and CGEF alongside LEF and CLEF. Tables~\ref{tab:node_classification_four} and~\ref{tab:node_classification_four_het} report the results on homophilic and heterophilic datasets, respectively. The four strategies achieve comparable results across all datasets, with differences generally within 1\%, suggesting that the choice of entropy formulation and the inclusion of cohesion preprocessing have limited influence on node classification performance.

\begin{table}[thbp]
\centering
\caption{Comparison of four strategies on homophilic datasets (\%)}
\label{tab:node_classification_four}
\small
\begin{tabular}{lccccc}
\hline
Method & Cora & Citeseer & Pubmed & Coauthor CS & Amazon Photos \\
\hline
LEF & 86.6$\pm$0.2 & 74.4$\pm$0.3 & 88.3$\pm$0.2 & 95.2$\pm$0.1 & 94.5$\pm$0.2 \\
GEF & 86.3$\pm$0.2 & 73.2$\pm$0.3 & 88.3$\pm$0.1 & 94.9$\pm$0.1 & 94.7$\pm$0.1 \\
CLEF & 85.8$\pm$0.3 & 74.0$\pm$0.3 & 88.5$\pm$0.1 & 95.2$\pm$0.0 & 94.6$\pm$0.2 \\
CGEF & 86.4$\pm$0.3 & 74.9$\pm$0.2 & 88.6$\pm$0.1 & 94.9$\pm$0.1 & 94.8$\pm$0.1 \\
\hline
\end{tabular}
\end{table}

\begin{table}[thbp]
\centering
\caption{Comparison of four strategies on heterophilic datasets (\%)}
\label{tab:node_classification_four_het}
\small
\begin{tabular}{lccccc}
\hline
Method & Cornell & Texas & Wisconsin & Chameleon & Actor \\
\hline
LEF & 75.14$\pm$2.13 & 78.38$\pm$9.70 & 81.96$\pm$7.67 & 66.73$\pm$0.82 & 35.31$\pm$0.52 \\
GEF & 74.32$\pm$1.91 & 77.84$\pm$4.19 & 81.76$\pm$7.04 & 64.93$\pm$0.73 & 35.49$\pm$0.52 \\
CLEF & 75.68$\pm$2.85 & 79.46$\pm$3.86 & 83.14$\pm$3.36 & 64.93$\pm$4.08 & 35.51$\pm$0.01 \\
CGEF & 74.86$\pm$4.24 & 79.73$\pm$1.91 & 82.35$\pm$4.43 & 66.67$\pm$0.87 & 34.86$\pm$0.81 \\
\hline
\end{tabular}
\end{table}

\subsubsection{Ablation studies and sensitivity analysis}

We first conduct ablation studies and sensitivity analysis specifically for the LEF component, since the effectiveness of the cohesion flow has already been validated by comparing LEF with CLEF and GEF with CGEF in the main experiments. Here we focus on the contribution of the temporal entropy evolution modeling within the LEF framework.

We compare the full LEF model with four variants: (1) LAST: using only the entropy at the final iteration; (2) Mean: averaging the entropy over all iterations; (3) MLP: using a multi-layer perceptron to encode the entropy sequence; (4) Random: replacing the original entropy sequence with a random one to eliminate all temporal evolutionary patterns. All variants share the same graph convolution backbone.

\begin{figure}[H]
    \centering
    \includegraphics[width=0.63\linewidth]{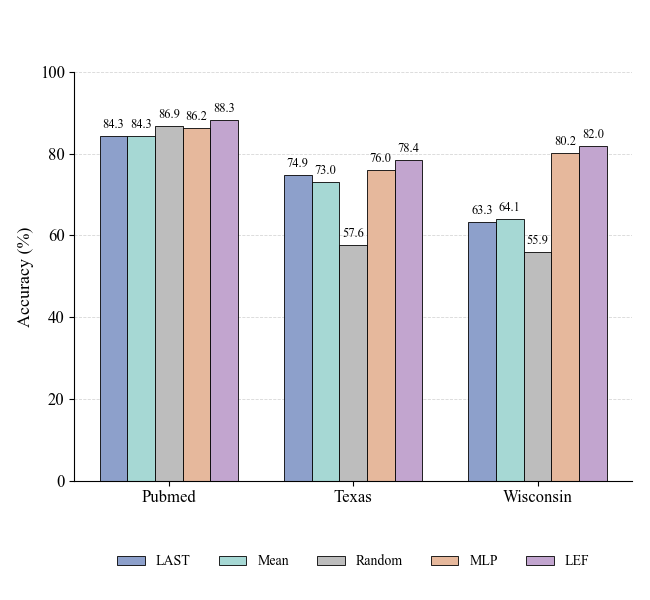}
    \caption{Ablation study results on test accuracy for LEF}
    \label{fig:ablation}
\end{figure}

As shown in Figure~\ref{fig:ablation}, the full LEF model consistently achieves the highest accuracy on all three datasets, confirming that modeling the temporal dynamics of entropy yields superior node representations compared to static or simple aggregation methods. The Random variant severely degrades performance on heterophilic datasets (Texas and Wisconsin), while remaining relatively high on the large homophilic dataset Pubmed. This indicates that meaningful temporal entropy information is critical for accurate classification on heterophilic graphs, while the original graph structure itself carries stronger signals on large homophilic graphs, making the model less sensitive to noise in edge importance.

Overall, the ablation results validate that the entropy-aware graph evolution within the LEF component effectively captures dynamic structural information, leading to robust performance improvements across diverse graph types.

We further analyze the sensitivity of LEF performance to the number of entropy flow iterations $N$, with results shown in Figure~\ref{fig:entropy_flow_duration}. For all three datasets, performance rises markedly with the increase of $N$, and then stabilizes or slightly drops when iterations become excessive. Across all datasets, the optimal range of $N$ lies between 6 and 8, achieving a good balance between classification accuracy and computational cost.

\begin{figure}[thbp]
\centering
\includegraphics[width=0.7\linewidth]{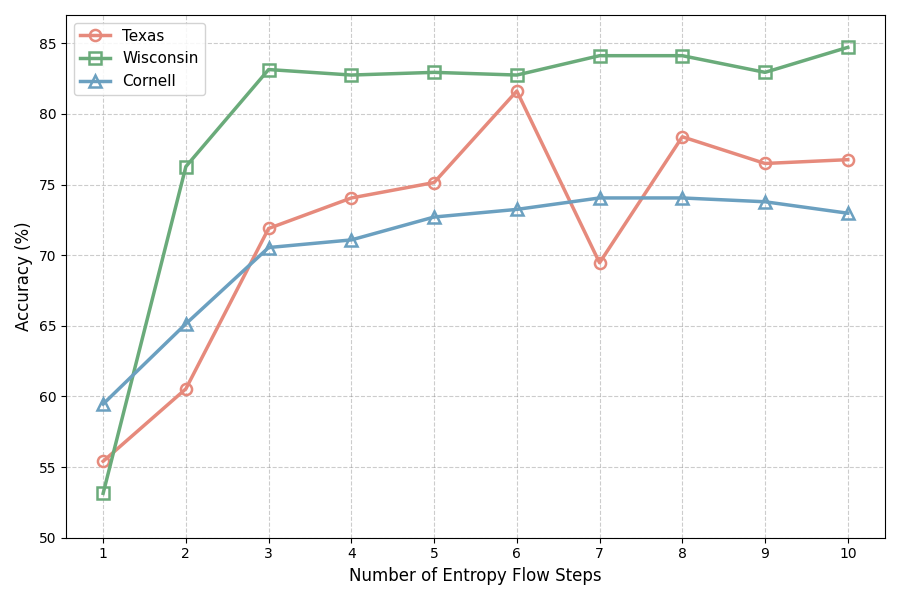}
\caption{Impact of the number of LEF iterations on test accuracy}
\label{fig:entropy_flow_duration}
\end{figure}

\subsection{Theoretical computational complexity analysis}
In this section, we provide a rigorous analysis of the computational complexity of LEF and CF, and compare them with GEF and discrete Ricci flow methods.

Let $G=(V,E,\mathbf{w})$ be a connected weighted graph with $n$ vertices, $m$ edges, and the average degree $D=2m/n$. Let $N$ be the number of iterations of the discrete entropy flow. We analyze the per-iteration complexity of LEF, CF, GEF  \cite{entropyflow}, and Ollivier Ricci flow  \cite{NLLG} as follows.

\paragraph{Cohesion flow (CF)}
For each edge $e=xy\in E$, computing the cohesion $C_{xy}$ requires evaluating the sets of exclusive neighbors $\mathscr{N}_x\setminus\mathscr{N}_y$, $\mathscr{N}_y\setminus\mathscr{N}_x$, and common neighbors $\mathscr{N}_{xy}$. These sets can be obtained by simple neighborhood lookups, and each involves at most $O(D)$ vertices. The quantities $A_{xy}$ and $B_{xy}$ are then computed via reciprocal weight summations over these sets, again with complexity $O(D)$. The normalization and exponential operation for $\widetilde C_{xy}$ and $C_{xy}$ are performed in constant time per edge. Consequently, computing the cohesion for all $m$ edges has a per-iteration complexity of $O(mD)$.

\paragraph{Local entropy flow (LEF)}
For each vertex $x$, computing the local random walk $\mu_x^\alpha$ only involves its 1-step neighborhood, with a complexity of $O(D)$. Thus, computing all local random walks for the entire graph has a complexity of $O(nD)=O(m)$. For each edge $e=xy$, computing the edge entropy $\Theta_e^\alpha$ involves summation over the set $\mathscr{N}_x\cup\mathscr{N}_y$, whose size is bounded by $O(D)$. Thus, computing the entropy for all $m$ edges has a complexity of $O(mD)$. The edge weight update step is element-wise, with a complexity of $O(m)$. Overall, the per-iteration complexity of LEF is $O(mD)$.

\paragraph{Global entropy flow (GEF)}
For GEF, the nowhere-zero random walk $\mu_x^\alpha$ has support on the entire vertex set $V$ (size $n$), so computing a single random walk has a complexity of $O(n)$. Computing all random walks for the graph has a complexity of $O(n^2)$. For each edge $e=xy$, computing the edge entropy involves summation over all $n$ vertices, leading to a per-iteration complexity of $O(mn)$ for all edges.

\paragraph{Ollivier Ricci flow}
The computational complexity of the discrete Ollivier Ricci flow is dominated by all-pairs shortest path calculation and optimal transport solving for each edge. We run Dijkstra's algorithm for every node to compute single-source shortest paths, with a per-node complexity of $O(m + n\log n)$ and a total cost of $O(nm + n^2\log n)$ across all $n$ nodes. Solving the discrete optimal transport problem between the two endpoints of an edge has a complexity of $O(D^3)$ \cite{NLLG, LaiX}, leading to a total cost of $O(mD^3)$ for all $m$ edges. Overall, the per-iteration complexity is $O(mn + n^2\log n + mD^3)$.

The sparse networks or scale-free networks are most common real-world networks. In the sparse scenario, we can assume that $D=O(1)$ (i.e. $n=O(m)$).  Under this assumption, both LEF and CF achieve a linear complexity $O(m)$, GEF reduces to a quadratic complexity $O(m^2)$, and the discrete Ricci flow suffers from a super-quadratic complexity $O(m^2\log m)$.
For scale-free networks, the average degree $D$ grows slowly with the number of vertices, typically as $D=O(\log n)$. Under this assumption, LEF and CF both maintain a nearly linear complexity $O(m\log n)$, with only a mild logarithmic factor overhead from the slowly growing average degree, GEF has a complexity $O(mn)$, and the discrete Ricci flow has a super-quadratic complexity $O(mn+ n^2\log n+m\log^3 n)$.

In summary, LEF and CF achieves a dramatic reduction in computational cost compared with both GEF and Ricci flow, especially for large-scale real-world networks, which is fully validated by our empirical running time experiments.

\section{Concluding remarks} \label{conclusion}

In this work, we propose two quantities for weighted graphs, local entropy and cohesion, and construct their associated flows, LEF and CF, for edge weight evolution. We further introduce a two-stage framework, CLEF, which applies CF followed by LEF for subsequent evolution.

We establish rigorous theoretical results for both flows, including global existence, uniqueness, and long-time convergence. Experimental results on community detection and node classification demonstrate the effectiveness of the proposed methods, with CLEF achieving competitive performance on benchmark networks. Complexity analysis confirms that LEF and CF attain near-linear complexity on sparse graphs, substantially outperforming GEF and discrete Ricci flow methods in scalability.

Overall, the proposed framework provides a theoretically grounded and computationally efficient approach to graph learning. Future work includes extensions to directed and dynamic graphs, as well as broader graph learning tasks.

\section*{Acknowledgements}
This research is partly supported by the National Natural Science Foundation of China (No. 12271039).

\section*{Declarations}

\noindent
\textbf{Data availability}:
All data needed are available freely at https://github.com/12tangze12/local-random-walk-based-entropy-flow.

\noindent
\textbf{Conflict of interest}: The authors declared no potential conflicts of interest with respect to the research, authorship, and publication of this article.

\noindent
\textbf{Ethics approval}: The research does not involve humans and/or animals. The authors declare that there are no ethics issues to be approved or disclosed.


\end{document}